\documentclass[preprint,12pt]{article}
\usepackage{geometry,comment}
\geometry{left=2.0cm, right=2.0cm, top=2.5cm, bottom=2.5cm}

\usepackage{setspace}
\linespread{1.3}

\usepackage{graphicx}
\usepackage{epstopdf}
\usepackage{amssymb}
\usepackage{amsmath}
\usepackage{amsthm}
\newtheorem{pro}{Proposition}
\newtheorem{thm}{Theorem}
\newtheorem{lem}{Lemma}
\newtheorem{rmk}{Remark}
\newtheorem{emp}{Example}

\title{Equilibrium, social welfare, and revenue in an infinite-server queue}

\author{Yan Su$^{a,}$\footnote{Corresponding author}, Junping Li$^{b}$\\
\small
{\it $^{a}$School of Management, Nanjing University of
Posts and Telecommunications, Nanjing, 210003, PR China}\\
\small {\it email: ysumath@163.com} \\
\small
{\it $^{b}$School of Mathematics and Statistics, Central South University, Changsha, 410083, PR China}\\
\small {\it email: jpli@mail.csu.edu.cn}\\
}

\begin{document}
\maketitle

\begin{abstract}
Motivated by the impact of emerging technologies on (toll) parks, this paper  studies a problem of  equilibrium, social welfare, and revenue for an infinite-server queue. More specifically, we assume that a customer's utility consists of a positive reward for receiving service minus a cost caused by the other customers in the system.
In the observable setting, we show the existence, uniqueness, and expressions of the individual threshold, the socially optimal threshold, and the optimal revenue threshold, respectively. Then, we prove that
the optimal revenue threshold is smaller than the socially optimal threshold, which is smaller than the individual one.
Furthermore, we also extend
the cost functions to any finite polynomial function with non-negative coefficients.
In the unobservable setting,
we derive the joining probabilities of individual and optimal revenue.
Finally, using numerical experiments, we complement our results and compare the social welfare and the revenue under these two information levels.

\noindent {\bf Keywords:}
Infinite-server queue, Equilibrium, Social welfare, Revenue
\end{abstract}





\section{Introduction}

Infinite-server queues are an important class of stochastic service systems.
In the real world, many service systems can be approximated as infinite-server queues,
e.g., large resorts, (toll) parks in urban areas.
Although the interior of these systems could be divided into several different queueing models or networks, it is still a reasonable approximation to view them as an infinite-server queue as a whole.
Therefore, the literature on infinite-server queues is very extensive; see, for instance, \cite{Altman1,Blom1,Brown1,Collings1,Fakinos,Mirasol1,Shanbhag1,Pang1} and the extensive references therein.

In the past few decades,
studying queueing systems from an economic perspective has become increasingly prominent. More specifically,
a specific reward-cost structure is imposed on the queueing system to reflect customers' desire to be served and unwillingness to wait.
Arriving customers are permitted to make decisions about whether to join the queue.
All customers would like to maximize their profit, taking into consideration that all the other customers also have the same goal. Thus, this situation
could be regarded as a game among the customers.
In the following, let us briefly review the development of studying the queueing problems from such an economic analysis.
In a seminal paper, Naor \cite{Naor} introduced the reward and the linear delay cost of customers into the $M/M/1$ queueing model. If the queue length can be accurately observed by the customers, Naor \cite{Naor} gave threshold strategies of the individual equilibrium, the socially optimal welfare, and the optimal revenue  and proposed the idea of levying fees to induce the social optimal strategy.
Edelson and Hilderbrand \cite{Edelson} complemented Naor's research from an unobservable case. Their conclusion shows that the social welfare and the revenue are equal when the queue length is not observed by customers. Therefore, they proposed a method of levying observation fees to make the social welfare and the revenue still coincide when customers' queueing strategy has a threshold type. However, in the case of non-homogeneous costs, the aforementioned results do not always hold.
Hassin \cite{Hassincom} compared Naor's observable model with Edelson and Hilderbrand's unobservable model. The conclusion shows that providing real-time information is not always beneficial to profit maximization of the manager and the social welfare under the profit maximizing admission fee also has the similar results.
Since then, because the strategic queueing models have been widely used in various service industries, more and more scholars have paid attention to
the problem of strategic queue and numerous excellent papers have been published, such as
vacation queues in the transportation industry (Guo and Hassin \cite{Guo2}),
retrial queueing systems with applications in networks (Wang and Zhang \cite{Wangret}, Cui, Su and Veeraraghavan \cite{Cui1}),
double-ended queues in the passenger-taxi service system
(Shi and Lian \cite{Shi}),
priority queues with discriminatory pricing
(Hassin and Haviv \cite{Hassinpro}, Wang, Cui, and Wang \cite{Wangpro}),
queues with uncertain/different information
(Cui and Veeraraghavan \cite{Cui2}, Hassin, Haviv, and Oz \cite{Hassin}, Chen and Hasenbein \cite{Hasenbein}, Liu \cite{Liuc}), etc.
The basic knowledge of strategic queues is summarized in Hassin and Haviv \cite{Hassinbook}. Recently, the book Hassin \cite{Hassin2016}  lists most of the relevant literature. Interested readers can refer to it and
the extensive references therein.

However, to
the best of our knowledge, the results on equilibrium customer behavior, social welfare, and revenue in an infinite-server queue have not been derived.
Models with infinite servers to approximatively characterize and analyze real problems arise in various situations in practice.
An example of the infinite-server queue may be illustrated by the decision making of tourists in modern parks.
In modern parks, congestion problems occur from time to time due to the centralized travel of people.
For example, in Fantawild (Disneyland, Universal studios) of China,
it is always reported that there are too many people staying in the park during the holidays,
and in urban (toll) parks located in densely populated metropolitan areas, we also usually see a large number of people traveling on weekends.
It is no difficult to find that whether tourists are willing to enter the park has a lot to do with the number of people staying in the park.
An intuitive feeling is that
the more the people stay in the park, the more  reluctant tourists are to join it.
The reason is that
according to the empirical (expected) information or the real-time information provided by the park on the mobile platform (the bulletin board), rational tourists will judge whether it is worth entering the park
and  their individual utilities are negatively correlated with the number of people in the park.
Based on this
phenomenon, we could model these parks as an infinite-server queue and quantify
tourists' behavior by using the game theory and analyze the equilibrium, the social welfare, and the revenue under different information levels, so as to provide some valuable advice to the public.

In the traditional literature,
we usually see that some basic hypotheses of the queueing model have the following salient characteristics:
the customer's reward is assumed to be $R>0$ and  customers' own cost is positively correlated with their sojourn time.
In the context of infinite-server queues, the customer's reward $R>0$ can have the same interpretation and thus, remain consistent with the previous literature. However,
the total costs of customers are assumed to be positively correlated with information on the number of customers in the system.
An interesting practical explanation is that in the park example,
this assumption is able to reflect the impact of the park population on tourist’s satisfaction in modern parks.
Using this new reward-cost structure, there are several contributions in the present paper. First, according to whether to  announce the real-time number of people, we divide the problem into the observable model and the unobservable model. For these two cases, we  analyze the individual equilibrium, the optimal social welfare, and the optimal revenue of the infinite-server queue and gives computable expressions for these optimal policies. Furthermore, we theoretically show the relationship of these optimal strategies and make some monotonic analyses.
Finally, we numerically compare the social
welfare and the revenue with different thresholds and information levels, and some valuable suggestions for
the system administrator are also presented.

The rest of this article is arranged as follows.
In Section 2, we give a detailed description of the model and the reward-cost
structure. Sections 3 and 4 is devoted to
the observable and unobservable cases of the model.
Section 5 shows numerical analyses including a
mini example which gives a simple operation procedure for calculating each quantity.
The proofs of the main
results are postponed to Section 6.
The paper ends presenting some conclusions and potential research directions in Section 7.

\section{Formulation and Preliminaries}
Following the background  described in the Introduction, here we  consider an infinite-server queue.
We assume that customers are homogeneous and arrive at the system according to a Poisson process with potential
arrival rate $\Lambda$.
The sojourn times of all customers in the system  are independent
and  follow a common general
distribution function  $B(x)$ with  a mean of $1/\mu$.
A customer's utility is assumed to consist of a reward for receiving service minus a cost caused by the other customers  in the system. More specifically, on successful completion of service, the service reward $R>0$ is the same for all customers.
If the system administrator announces the real-time number of customers in the
system, the costs of arriving customers are  $C_1 N+ C_2 N^2$ when there are $N$ customers in the system.
There are many practical explanations when $C_1$ and $C_2$  take different values.
\begin{itemize}
  \item[1.] If $C_1>0$ and $C_2=0$, we have $C_1 N+ C_2 N^2=C_1 N$, which means that
     the cost is a linear function of the current number of customers in the system. Thus, this corresponds to the risk-neutral customers.
  \item[2.] If $C_1=0$ and $C_2>0$, we have $C_1 N+ C_2 N^2=C_2 N^2$, which implies that
      the cost is a quadratic function with respect to the real-time number of customers in the system.
      This represents the risk-averse customers.
\end{itemize}
If the system does not announce the real-time number of customers, we assume that customers use the expected information to estimate the number of customers in the system. Therefore, the costs of customers are $C_1 \mathbb{E}(L)+ C_2 \mathbb{E}(L)^2$, where $\mathbb{E}(L)$ is the average number of customers in the system.
Similarly, we could get corresponding interpretations when customers use the cost structure $C_1 \mathbb{E}[L]+ C_2 \mathbb{E}[L]^2$. Moreover, if $C_1=C (1+\mu A''(1)\int_{0}^\infty [1- B(x)]^2 dy)$ and $C_2=C$, we also have $C_1 \mathbb{E}[L]+ C_2 \mathbb{E}[L]^2=C \mathbb{E}[L^2]$, where $A(z)=\mathbb{E}[z^X]$  (see Holman, Chaudhry and  Kashyap \cite{Holman}).  This expression indicates that customers' costs are linear to the second moment of queue length. In the following sections, we only
require $\max\{C_1,C_2\}>0$.
Therefore, the above statements are only a special case. We also investigate that the cost structures are any finite polynomial function with non-negative coefficients, but for brevity, if
the extended conclusions can be obtained in the same way, we will only state them in remarks.
Besides the individual utility,
the additive social utility composed of
the sum of individual utilities
and the revenue composed of long-term gains from monopolist pricing are also analyzed in the following sections.

\section{The Observable Model}\label{tom}

\subsection{Individual Equilibrium}\label{sec11}
In the observable setting, we assume that
the real-time number of customers in the system are always  posted on the bulletin board and
all rational customers can clearly know this information before deciding whether to join the system.
According to the reward-cost structure,
an arriving customer who finds  $n$ customers in the system joins the queue with the individual utility
$ R-  (C_1 n+ C_2 n^2)$
and balks with the individual utility 0. It follows from $R >0$ that rational customers will
join the system if and only if the individual utility is
nonnegative. Therefore, the maximum integer $n_e-1$ that the customers decide to join the system will satisfy the following  two inequalities
\begin{equation}\label{inb}
   R-  [C_1 (n_e-1)+ C_2 (n_e-1)^2] \geq 0,
\end{equation}
\begin{equation}\nonumber
R-  [C_1 n_e+ C_2 n_e^2] <0.
\end{equation}
Solving the above two inequalities, we have the following theorem.
\begin{thm}
In the observable infinite-server queue,
there exists a unique equilibrium strategy ${n}_e=\max\bigl\{N: N \leq \frac{2C_2- C_1 + \sqrt{( C_1)^2+ 4  C_2 R}}{2  C_2 } \bigl\}$ such that customers join the system if and only if $n < {n}_e$.
\end{thm}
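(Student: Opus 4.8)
The plan is to exploit the monotonicity of the arriving customer's joining utility as a function of the observed number $n$, which simultaneously yields the threshold structure, the uniqueness, and the explicit value of $n_e$. Set $f(n) = R - C_1 n - C_2 n^2$, the utility of a customer who finds $n$ others in the system. Since $C_1, C_2 \geq 0$ with $\max\{C_1, C_2\} > 0$, I would first note that $f$ is strictly decreasing on $\{0, 1, 2, \ldots\}$, because $f(n) - f(n+1) = C_1 + C_2(2n+1) > 0$ for every $n \geq 0$. As $f(0) = R > 0$ while $f(n) \to -\infty$, there is a well-defined largest integer $m$ with $f(m) \geq 0$, and monotonicity forces $f(n) \geq 0$ for all $n \leq m$ and $f(n) < 0$ for all $n > m$. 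This is precisely the content of the two displayed inequalities with $m = n_e - 1$, so the joining region is the downward-closed set $\{0, 1, \ldots, n_e - 1\}$, i.e.\ a customer joins if and only if $n < n_e$. Because the utility given the observation $n$ does not depend on the other customers' actions, joining iff $f(n) \geq 0$ is a dominant strategy; uniqueness then follows immediately, since any other threshold would force either a customer with negative utility to join or one with nonnegative utility to balk.

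Next I would convert the defining inequalities into the stated closed form. The condition $f(n) \geq 0$ is equivalent to the quadratic inequality $C_2 n^2 + C_1 n - R \leq 0$. Assuming $C_2 > 0$, the left-hand side is an upward parabola with a unique nonnegative root $n^* = \frac{-C_1 + \sqrt{C_1^2 + 4C_2 R}}{2C_2}$ (nonnegativity holds because $\sqrt{C_1^2 + 4C_2 R} > C_1$), so $f(n) \geq 0 \iff 0 \leq n \leq n^*$. Hence $n_e - 1 = \lfloor n^* \rfloor$, and since $\lfloor n^* \rfloor + 1 = \lfloor 1 + n^* \rfloor = \max\{N : N \leq 1 + n^*\}$, rewriting $1 + n^* = \frac{2C_2 - C_1 + \sqrt{C_1^2 + 4C_2 R}}{2C_2}$ recovers exactly the formula in the statement.

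The only delicate point — and the step I expect to require the most care — is the degenerate case $C_2 = 0$ (the linear, risk-neutral cost), in which the displayed denominator vanishes and the quadratic argument above no longer applies directly. Here I would treat the linear cost on its own: the condition $f(n) \geq 0$ becomes $C_1 n \leq R$, so $n_e = \max\{N : N \leq 1 + R/C_1\}$; alternatively, one reads the stated expression as its limit as $C_2 \downarrow 0$, and a short expansion $\sqrt{C_1^2 + 4C_2 R} = C_1 + \frac{2C_2 R}{C_1} + o(C_2)$ shows this limit equals $1 + R/C_1$, matching the direct computation. Once this caveat is absorbed, existence, uniqueness, and the explicit formula are all in hand, with the remaining verification reducing to the routine algebra of the quadratic root and the floor-function identity.
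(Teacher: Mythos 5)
Your proposal is correct and follows essentially the same route as the paper, which simply solves the two displayed inequalities for the quadratic cost to obtain the closed-form threshold. Your additional care with the degenerate case $C_2=0$ (where the stated formula's denominator vanishes) is a worthwhile observation that the paper's one-line argument glosses over.
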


\subsection{Social Optimality}
Now, we could consider the problem of maximizing the expected total net gain of all customers per time unit, i.e, the socially optimal welfare. Since the actual (long-run) joining rate is an important index, we
must proceed in a different mode from the individual equilibrium.

Denote $\rho=\frac{\Lambda}{\mu}$ and let $\mathbb{P}(N=j)$ $(j=1,2,\dots,n)$ be the stationary distribution of the $M/G/n/n$ queue.
Note that when all customers consistently use balking strategies $n$ (join the system if and only if the number of customers is less than $n$), we can regard this process as an $M/G/n/n$ queue.
It follows from the results of $M/G/n/n$ queue (see  Fakinos \cite{Fakinos} or Shortle,  Thompson,  Gross, and  Harris \cite{Gross}) that
\begin{equation}\label{4}
  \mathbb{P}(N=j)= \frac{(\frac{\Lambda}{\mu})^j \frac{1}{j!}}{\sum_{k=0}^{n}(\frac{\Lambda}{\mu})^k \frac{1}{k!}}=\frac{ \frac{\rho^j}{j!}}{\sum_{k=0}^{n}\frac{\rho^k}{k!}},
\end{equation}
\begin{equation}\label{5}
  \mathbb{E}(L_n)=\frac{ \sum_{k=0}^{n} k \frac{\rho^k}{k!}}{\sum_{k=0}^{n}\frac{\rho^k}{k!}}=\frac{\rho \sum_{k=0}^{n-1} \frac{\rho^k}{k!}}{\sum_{k=0}^{n}\frac{\rho^k}{k!}},
\end{equation}
where $\mathbb{E}(L_n)$ is the expected queue length of the $M/G/n/n$ queue.
Then, if all customers consistently use the balking strategy $n$, the actual joining
rate of customers  is
$$\Lambda  \mathbb{P}(N < n)=\Lambda\frac{ \sum_{k=0}^{n-1} \frac{\rho^k}{k!}}{\sum_{k=0}^{n}\frac{\rho^k}{k!}}= \mu \mathbb{E}(L_n).$$

Let $S^r(n)$ be the expected total net gain per time unit under balking strategy $n$. Using the PASTA property (see Wolff \cite{Wolff}), we arrive at the following expressions of $S^r(n)$:
\begin{eqnarray}
   S^r(n) &=&  \Lambda \sum_{m=0}^{n-1} \mathbb{P}(N=m)\biggl[R- (C_1 m+ C_2 m^2)\biggl]\label{bet1}\\
  &=&  \Lambda \biggl[ R \mathbb{P}(N<n)- \bigl(C_1  \sum_{m=0}^{n-1} m \mathbb{P}(N=m)+ C_2 \sum_{m=0}^{n-1} m^2 \mathbb{P}(N=m)\bigl)\biggl] \nonumber \\
  &=& \mu R \mathbb{E}(L_{n})- \mu \rho  \biggl[C_1  \frac{\sum_{m=0}^{n-1} m \frac{\rho^m}{m!}}{\sum_{m=0}^{n} \frac{\rho^m}{m!}}+ C_2 \frac{\sum_{m=0}^{n-1} m^2 \frac{\rho^m}{m!}}{\sum_{m=0}^{n} \frac{\rho^m}{m!}}\biggl]. \label{sr}
\end{eqnarray}

According to the expression of (\ref{sr}), we could get the following key results about the socially optimal welfare. The proof can be found in Section \ref{mainr}.

\begin{thm}\label{thm2}
In the observable infinite-server queue, there exists
a unique socially optimal threshold  strategy
\begin{equation}\label{nse}
n_s=\max\biggl\{N: \frac{ \rho \sum_{i=1}^2 C_i \biggl[ \frac{\sum_{m=0}^{N-1} m^i \frac{\rho^m}{m!}}{\sum_{m=0}^{N} \frac{\rho^m}{m!}}-\frac{\sum_{m=0}^{N-2} m^i \frac{\rho^m}{m!}}{\sum_{m=0}^{N-1} \frac{\rho^m}{m!}}\biggl]}{\mathbb{E}(L_{N} )-\mathbb{E}(L_{N-1})} \leq  R  \biggl\}
\end{equation}
such that  $S^r(n)$ is strictly increasing in $n$ when $n \leq n_s$ and strictly decreasing in $n$ when $n>n_s$.
Furthermore, $n_s$ is decreasing in $\rho$.
\end{thm}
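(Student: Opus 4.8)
The plan is to study the forward differences $\Delta S^r(n):=S^r(n)-S^r(n-1)$ and reduce the claimed increasing-then-decreasing pattern to a single-crossing property of a ratio. Writing \eqref{sr} as $S^r(n)=\mu R\,\mathbb{E}(L_n)-\mu\rho\sum_{i=1}^2 C_i G_i(n)$ with $G_i(n)=\frac{\sum_{m=0}^{n-1}m^i\rho^m/m!}{\sum_{m=0}^{n}\rho^m/m!}$, one gets $\Delta S^r(n)=\mu R\,[\mathbb{E}(L_n)-\mathbb{E}(L_{n-1})]-\mu\rho\sum_{i=1}^2 C_i[G_i(n)-G_i(n-1)]$. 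Factoring out the reward increment yields
\[
\Delta S^r(n)=\mu\,[\mathbb{E}(L_n)-\mathbb{E}(L_{n-1})]\,\bigl(R-H(n)\bigr),
\]
where $H(n)$ is exactly the ratio in \eqref{nse}. Hence $\Delta S^r(n)>0$ iff $H(n)<R$, and the theorem reduces to showing that $\{N:H(N)\le R\}$ is an initial segment of integers, i.e. that $H$ crosses the level $R$ only once and from below, so that $n_s=\max\{N:H(N)\le R\}$ is the location of the peak.

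First I would check that the denominator $\mathbb{E}(L_n)-\mathbb{E}(L_{n-1})$ is strictly positive, legitimizing the factorization and making the sign of $\Delta S^r(n)$ depend solely on $R-H(n)$. Using $\mathbb{E}(L_n)=\rho\,T_{n-1}/T_n$ with $T_n=\sum_{k=0}^n \rho^k/k!$ (which is \eqref{5}), this positivity is equivalent to the log-concavity inequality $T_{n-1}^2>T_n T_{n-2}$. Since the sequence $a_k=\rho^k/k!$ is itself log-concave ($a_{k-1}a_{k+1}/a_k^2=k/(k+1)<1$), its partial sums $T_n$ are log-concave as well, which settles the claim; equivalently, $\mathbb{E}(L_n)$ is the mean occupancy of an $M/G/n/n$ loss system and is strictly increasing in $n$ by a routine coupling.

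The heart of the argument is the single-crossing of $H$. Clearing denominators in $S^r(n)=\Lambda W_n/T_n$ with $W_n:=\sum_{m=0}^{n-1}a_m g(m)$ and $g(m):=R-C_1 m-C_2 m^2$, one finds that $\Delta S^r(n)$ has the same sign as $P(n):=a_{n-1}g(n-1)T_{n-1}-a_n\sum_{m=0}^{n-2}a_m g(m)$, so that dividing by $a_{n-1}T_{n-1}>0$ recasts $\Delta S^r(n)>0$ as $g(n-1)>\tfrac{\rho}{n}\,\frac{\sum_{m=0}^{n-2}a_m g(m)}{T_{n-1}}$. I would then prove that $H$ is strictly increasing by a cross-multiplication reducing everything to a polynomial inequality in the truncated sums $T_k$ and $S_k^{(i)}=\sum_{m=0}^k m^i a_m$, exploiting two structural facts: that $g$ is concave and strictly decreasing on $m\ge 0$ (so its weighted averages over $\{0,\dots,n-2\}$ exceed $g(n-1)$), and the log-concavity of $(a_k)$ and $(T_k)$. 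This single-crossing is the main obstacle: the competing effects — $g(n-1)$ falling like $-C_2 n^2$ while the normalizing factor $\rho/n$ shrinks and $W_{n-1}$ eventually turns negative — must be shown to resolve into a unique sign change, which is precisely where the delicate estimates on the truncated exponential sums are needed. Granting it, $S^r$ is strictly increasing up to $n_s$ and strictly decreasing afterwards, the first assertion.

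Finally, for the dependence on $\rho$, since $n_s=\max\{N:H(N;\rho)\le R\}$, it suffices to show that for each fixed $N$ the ratio $H(N;\rho)$ is nondecreasing in $\rho$: then raising $\rho$ can only shrink the admissible set and so lower its maximum, giving $n_s$ decreasing. I would obtain this either by treating $\rho$ as a continuous parameter and showing $\partial H/\partial\rho\ge 0$ after clearing denominators, or by a probabilistic monotonicity argument reflecting that a heavier load raises the marginal congestion cost per additional admitted customer. The same scheme covers the polynomial-cost extension noted in the remarks: replacing $g(m)=R-\sum_j C_j m^j$ with nonnegative coefficients keeps $g$ concave and strictly decreasing on $m\ge 0$, so the factorization of $\Delta S^r(n)$, the positivity of the reward increment, and the single-crossing of $H$ all persist with $S_k^{(i)}$ summed over the relevant powers.
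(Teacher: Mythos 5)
Your overall architecture matches the paper's: factor the increment as $\Delta S^r(n)=\mu\bigl[\mathbb{E}(L_n)-\mathbb{E}(L_{n-1})\bigr]\bigl(R-H(n)\bigr)$, check the bracket is positive, and reduce uniqueness of $n_s$ to strict monotonicity of $H(N)$ in $N$, and the $\rho$-monotonicity of $n_s$ to monotonicity of $H(N;\rho)$ in $\rho$. The positivity of $\mathbb{E}(L_n)-\mathbb{E}(L_{n-1})$ via log-concavity of the truncated exponential sums $T_n$ is fine (the paper gets the same fact by a coupling argument, Proposition 4(a)), and your reformulation $\Delta S^r(n)>0\iff g(n-1)>\tfrac{\rho}{n}\,W_{n-1}/T_{n-1}$ is algebraically correct.

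However, there is a genuine gap at exactly the point you flag as ``the heart of the argument'': you assert that the single-crossing/monotonicity of $H$ follows from cross-multiplication plus two structural facts (concavity and decrease of $g$, and log-concavity of $(a_k)$ and $(T_k)$), but these are not sufficient, and no actual proof is given. Log-concavity yields $T_n^2>T_{n-1}T_{n+1}$, whereas the monotonicity of $H(N)$ in $N$ requires (after expanding $\sum_{m\le n}m^i\rho^m/m!$ as a positive combination $\sum_j a_j\rho^j T_{n-j}$) the strictly stronger three-fold product inequality
\begin{equation*}
T_{n-t}T_{n-1}^2+T_{n-2-t}T_n^2+T_{n-1-t}T_{n-2}T_{n+1} \;>\; T_{n-1-t}T_{n-1}T_n+T_{n-t}T_{n-2}T_n+T_{n-2-t}T_{n-1}T_{n+1},
\end{equation*}
which the paper proves by a delicate term-by-term comparison of the coefficients of each power $\rho^m$ (this is Proposition 3(a), the longest computation in the paper). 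Likewise the $\rho$-monotonicity of $H(N;\rho)$ is not a one-line ``clear denominators and check the sign of $\partial H/\partial\rho$'': the paper needs a separate ratio-of-polynomials criterion (Lemma 1) together with a nontrivial verification that the coefficient ratios of the numerator and denominator polynomials are ordered (Proposition 3(b)). Your observation that weighted averages of the decreasing $g$ over $\{0,\dots,n-2\}$ exceed $g(n-1)$ gives $W_{n-1}>g(n-1)T_{n-2}$, which does not by itself control the competing non-monotone factor $\tfrac{\rho}{n}W_{n-1}/T_{n-1}$. So the proposal correctly identifies where the difficulty lies but does not resolve it; as written, the uniqueness of $n_s$ and its monotonicity in $\rho$ remain unproven.
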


\begin{rmk}\label{rmkne}
There exists an intuitive explanation for the relationship between $n_s$ and $\rho$.
As $\rho$ increases, if $n_s$ remains the same,
the number of customers in the system will be {\it stochastically increasing} in $\rho$. This, together with the individual utility $ R-  (C_1 n+ C_2 n^2)$, implies  that the (long-run) average marginal net utility for each arriving customer will become very small.
At this point, the optimal threshold $n_s$ should be reduced to
increase the average net utility of each customer in the system and further increase the additive social
utility.
This interpretation is consistent with the monotonicity of $n_s$ with respect to $\rho$.
On the other hand, the smaller $\rho$ is, the greater the probability that arriving customers will see a small number of customers in the system. Simple calculations yield
\begin{equation}\nonumber
  \lim_{\rho\to 0}\frac{\rho  \biggl[ \frac{\sum_{m=0}^{n} m^i \frac{\rho^m}{m!}}{\sum_{m=0}^{n+1} \frac{\rho^m}{m!}}-\frac{\sum_{m=0}^{n-1} m^i \frac{\rho^m}{m!}}{\sum_{m=0}^{n} \frac{\rho^m}{m!}}\biggl]}{\mathbb{E}(L_{n+1} )-\mathbb{E}(L_{n})}=n^i.
\end{equation}
This combining with (\ref{inb}) and  (\ref{nse}) implies that when $\rho$ tends to 0,
$n_s$ and $n_e$ are getting closer and closer and therefore
allowing customers with the positive individual utility value to enter the system will increase the social welfare.
\end{rmk}

\begin{rmk}
$\mathbf{(a)}$
Letting $\Lambda' >\Lambda$, we consider
a new strategy such that if the number of customers in the system is less than $n_s$, the new system with  parameter $\Lambda'$ (All the other parameters are  assumed to be the same) allows customers to enter with probability $\frac{\Lambda}{\Lambda'}$. According to the decomposability of the Poisson flow and the expression (\ref{sr}), we see that under this new  strategy,
the social welfare of  this new system is also equal to $S^r(n_s)$.
Note that we can regard this social welfare problem
of the infinite-server queue as a particular (long-run) average reward model in the theory of the Markov decision processes (MDPs). Thus,
according to the results of the MDPs (see Chapter 11 in Puterman \cite{Puterman} or Feinberg and Yang \cite{Feinberg}),
the deterministic stationary optimal strategy (or optimal pure strategy) always exists, which means that $S^r(n_s)$ is increasing in $\Lambda$.

$\mathbf{(b)}$
For fixed $n < n_e$, let $\mathbb{P}[N(\rho)=j]$, $j=1,2,\dots,n$, be the stationary distribution of the $M/G/n/n$ queue with parameter $\rho$. Using the method of the sample path comparison, we easily have $N(\rho)\leq_{st}  N(\rho')$ when $\rho<\rho'$, where $\leq_{st} $ is the usual stochastic order (see M\"uller and Stoyan \cite{Muller} or Keilson and Kester \cite{Keilson}).  Therefore, for the decreasing sequence $R- (C_1 n+ C_2 n^2)$ with respect to $n$, we have
$$\sum_{m=0}^{n-1} \mathbb{P}(N(\rho)=m)\biggl[R- (C_1 m+ C_2 m^2)\biggl]>\sum_{m=0}^{n-1} \mathbb{P}(N(\rho')=m)\biggl[R- (C_1 m+ C_2 m^2)\biggl],$$
which, together with (\ref{bet1}), implies that $\frac{S^r(n)}{\Lambda}$ is strictly decreasing in $\rho$. This means that $S^r(n_s)$ is strictly increasing in $\mu$.
\end{rmk}

\subsection{The System's Revenue}\label{Stackelberg}
In this subsection, we introduce a price $P_o$ to study
the system's revenue maximizing problem. Because customers respond to $P_o$,
we model the interaction between the
system administrator and
the customers as a Stackelberg game, where the
system administrator is the leader and customers are the followers. The goal of the system administrator is to maximize its
revenue while anticipating customers’ equilibrium
strategies.
Similar to the traditional analysis (see Section 2.4 of Hassin and Haviv \cite{Hassinbook}),
under a balking strategy $n$, the best price is $P_o=R-  [C_1  (n-1)+ C_2  (n-1)^2]$ and thus, the expected total net revenue, $S^r_m(n)$, can be expressed as follows:
\begin{eqnarray}\label{rts}
   S^r_m(n) &=& \Lambda  \mathbb{P}(N < n)P_o\nonumber \\
  &=& \mu  \mathbb{E}(L_{n}) [ R-  (C_1  (n-1)+ C_2  (n-1)^2)]. \label{rev1}
\end{eqnarray}
According to this expression,
we are able to obtain the following theorem about the optimal revenue. The proof can be found in Section \ref{mainr}.

\begin{thm}\label{thm3}
In the observable infinite-server queue,
there exists
a unique optimal threshold strategy
\begin{equation}\label{nrev}
  n_m=\max\biggl\{N: \frac{ \sum_{i=1}^2 C_i ( \mathbb{E}(L_{N})(N-1)^{i}- \mathbb{E}(L_{N-1})(N-2)^{i})]}{ \mathbb{E}(L_{N})- \mathbb{E}(L_{N-1})} \leq  R  \biggl\}
\end{equation}
such that
$S^r_m(n)$ is strictly increasing in $n$ when $n \leq n_m$ and strictly decreasing in $n$ when $n>n_m$. Moreover,
$n_m$ is increasing in $\rho$ and the optimal price for the system administrator  is $\widetilde{P}_o=R-  [C_1 (n_m-1)+ C_2 ({n_m-1} )^2].$
\end{thm}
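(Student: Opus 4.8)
The plan is to prove unimodality of $S^r_m$ by examining the forward difference $S^r_m(n)-S^r_m(n-1)$, whose sign change is precisely what the threshold in (\ref{nrev}) records. Writing $f(k)=C_1 k+C_2 k^2$ so that $S^r_m(n)=\mu\, \mathbb{E}(L_n)\,[R-f(n-1)]$, a direct computation gives
\[
S^r_m(n)-S^r_m(n-1)=\mu\Bigl[R\bigl(\mathbb{E}(L_n)-\mathbb{E}(L_{n-1})\bigr)-\bigl(\mathbb{E}(L_n)f(n-1)-\mathbb{E}(L_{n-1})f(n-2)\bigr)\Bigr].
\]
From the joining-rate identity $\mu\,\mathbb{E}(L_n)=\Lambda\,\mathbb{P}(N<n)$ one has $\mathbb{E}(L_n)=\rho\,(1-B_n)$ with $B_n=\mathbb{P}(N=n)$ the Erlang loss probability, and since $B_n$ is strictly decreasing in $n$ the gap $\mathbb{E}(L_n)-\mathbb{E}(L_{n-1})=\rho(B_{n-1}-B_n)$ is strictly positive. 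Dividing by it shows that $S^r_m(n)\geq S^r_m(n-1)$ if and only if $\Phi(n)\leq R$, where $\Phi(n)$ denotes the ratio on the left-hand side of the inequality in (\ref{nrev}). Hence the theorem reduces to showing that $\Phi$ is strictly increasing in $n$: the quantity $\Phi(n)-R$ then changes sign exactly once, $S^r_m$ rises and then falls, its maximizer is unique, and it equals $n_m=\max\{N:\Phi(N)\leq R\}$. Existence and finiteness of $n_m$ follow because $\Phi(1)=0\leq R$ (using $\mathbb{E}(L_0)=0$) while $\Phi(n)\to\infty$ as $f$ grows quadratically.

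The crux, and the step I expect to be the main obstacle, is the strict monotonicity of $\Phi$. Introducing $w_n=\mathbb{E}(L_n)/\bigl(\mathbb{E}(L_n)-\mathbb{E}(L_{n-1})\bigr)$, one rewrites $\Phi(n)=f(n-2)+w_n\,\bigl[f(n-1)-f(n-2)\bigr]$, so that
\[
\Phi(n+1)-\Phi(n)=w_{n+1}\bigl[f(n)-f(n-1)\bigr]-(w_n-1)\bigl[f(n-1)-f(n-2)\bigr].
\]
Because $f$ is convex and increasing (as $C_1,C_2\geq0$ and $\max\{C_1,C_2\}>0$), its forward differences $f(k)-f(k-1)$ are positive and nondecreasing, so the sign is governed entirely by $w_n$: it suffices to prove $w_{n+1}\geq w_n-1$, and in fact I expect the stronger statement that $w_n$ is nondecreasing. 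To obtain this I would use the Erlang recursion $B_n=\rho B_{n-1}/(n+\rho B_{n-1})$ to reduce $w_n$ to the closed form $w_n=n/\bigl[B_{n-1}\,(n-\mathbb{E}(L_{n-1}))\bigr]$ (with $n-\mathbb{E}(L_{n-1})>0$ since $\mathbb{E}(L_{n-1})\leq n-1<n$), and then establish monotonicity of this expression in $n$ directly from the recursion; combined with convexity of $f$ this yields $\Phi(n+1)>\Phi(n)$.

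For the comparative statics I would fix $n$ and show that $\Phi(n)$ is strictly decreasing in $\rho$, which immediately forces $n_m=\max\{N:\Phi(N)\leq R\}$ to be nondecreasing in $\rho$. Since $f(n-2)$ and $f(n-1)-f(n-2)$ are free of $\rho$, this amounts to proving $w_n$ is decreasing in $\rho$; expressing $w_n=n/[B_{n-1}(n-\mathbb{E}(L_{n-1}))]$ and using that $B_{n-1}$ and $\mathbb{E}(L_{n-1})=\rho(1-B_{n-1})$ are both increasing in $\rho$ (the latter reflecting the stochastic order $N(\rho)\leq_{st}N(\rho')$ of the Remark) shows the denominator grows with $\rho$. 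The delicate point is that $w_n$ is a ratio, so the numerator and denominator must be tracked together rather than term by term; I would control this either by differentiating the Erlang quantities in $\rho$ or by a sample-path comparison. Finally, the optimal-price claim is immediate: $n_m$ is the revenue-maximizing threshold, and substituting it into the indifference price $P_o=R-[C_1(n-1)+C_2(n-1)^2]$ that induces threshold $n$ gives $\widetilde{P}_o=R-[C_1(n_m-1)+C_2(n_m-1)^2]$.
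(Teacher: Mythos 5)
Your reduction is sound and in fact coincides with the paper's: writing $S^r_m(n)-S^r_m(n-1)$ over the positive gap $\mathbb{E}(L_n)-\mathbb{E}(L_{n-1})$ gives exactly the threshold condition $\Phi(N)\leq R$, and your decomposition $\Phi(n)=f(n-2)+w_n[f(n-1)-f(n-2)]$ with $w_n=\mathbb{E}(L_n)/(\mathbb{E}(L_n)-\mathbb{E}(L_{n-1}))$ is the same identity the paper uses, since $w_{n+1}=1/\bigl(\mathbb{E}(L_{n+1})/\mathbb{E}(L_n)-1\bigr)$. So, as you say, everything hinges on two facts: $w_n$ is nondecreasing in $n$ (equivalently $\mathbb{E}(L_n)^2\geq\mathbb{E}(L_{n+1})\mathbb{E}(L_{n-1})$, i.e.\ log-concavity of the Erlang mean in $n$), and $w_n$ is decreasing in $\rho$ (equivalently $\mathbb{E}(L_{n+1})/\mathbb{E}(L_n)$ is increasing in $\rho$). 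The existence/finiteness of $n_m$ and the form of $\widetilde{P}_o$ are handled correctly.

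The gap is that neither of these two facts is actually proved, and they are the entire technical content of the theorem. For the $n$-monotonicity you say you would ``establish monotonicity of this expression in $n$ directly from the recursion''; the paper needs a genuine argument here, namely a three-process coupling of $M/M/n/n$, $M/M/(n+1)/(n+1)$ and $M/M/(n+2)/(n+2)$ sample paths to prove the concavity $\mathbb{E}(L_{n+1})-\mathbb{E}(L_n)>\mathbb{E}(L_{n+2})-\mathbb{E}(L_{n+1})$, from which log-concavity follows by an AM--GM step (Proposition 2(b),(c) of the paper). For the $\rho$-monotonicity your first attempt is wrong as stated: $B_{n-1}$ increases in $\rho$ but $n-\mathbb{E}(L_{n-1})$ \emph{decreases} in $\rho$, so the denominator $B_{n-1}\bigl(n-\mathbb{E}(L_{n-1})\bigr)$ is a product of an increasing and a decreasing factor and cannot be handled term by term. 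You notice this and defer to ``differentiating or a sample-path comparison,'' but stochastic monotonicity of $N(\rho)$ gives monotonicity of each mean, not of the ratio of two means, and differentiating leads to exactly the inequality the paper proves the hard way: that $\bigl(\sum_{k=0}^{n}\rho^k/k!\bigr)^2\big/\bigl[\bigl(\sum_{k=0}^{n-1}\rho^k/k!\bigr)\bigl(\sum_{k=0}^{n+1}\rho^k/k!\bigr)\bigr]$ is increasing in $\rho$, which occupies most of Proposition 2(d) and rests on a separate lemma about ratios of polynomials with ordered coefficient ratios. Until these two lemmas are supplied, the proposal is a correct plan rather than a proof.
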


\begin{rmk}
The relationship between $n_m$ and $\rho$ has an interesting interpretation.
As $\rho$ increases, increasing $n_m$ will allow more customers to be served, thereby increasing the (long-run) revenue of the system. This reflects the small profit but quick turnover strategy often used in economics.
\end{rmk}

Having obtained the optimal threshold strategies of individual equilibrium, social welfare, and revenue,
now, we could compare
the relationship of size between them. The following results show that for the observable case, if the customers' costs are positively correlated with the real-time number of customers in the system, the three thresholds are generally different and the unequal relationship is consistent with the traditional conclusion found by Naor \cite{Naor}.

\begin{thm}\label{comnnn}
In the observable infinite-server queue, we have
$n_m \leq  n_s \leq n_e$.
\end{thm}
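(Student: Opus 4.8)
The plan is to recast all three thresholds in one common form and then compare them through two pointwise inequalities. Writing $a_m=\rho^m/m!$ and $D_n=\sum_{k=0}^{n}a_k$, Theorem~1 is equivalent to $n_e=\max\{N:\,g_e(N)\le R\}$ with $g_e(N)=C_1(N-1)+C_2(N-1)^2$, while (\ref{nse}) and (\ref{nrev}) read $n_s=\max\{N:\,g_s(N)\le R\}$ and $n_m=\max\{N:\,g_m(N)\le R\}$. Each threshold is thus the largest $N$ for which its defining quantity stays below $R$, and each defining set is a nonempty finite initial segment since $g_x(N)\to\infty$ and $N=1$ always qualifies. Hence the pointwise chain $g_e(N)\le g_s(N)\le g_m(N)$ would give $\{g_m\le R\}\subseteq\{g_s\le R\}\subseteq\{g_e\le R\}$, and taking maxima yields exactly $n_m\le n_s\le n_e$. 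Everything therefore reduces to the two pointwise inequalities $g_e\le g_s$ and $g_s\le g_m$.

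The first inequality is the easy one. Clearing denominators in (\ref{nse}), using $a_N=(\rho/N)a_{N-1}$, and writing $c(m)=C_1m+C_2m^2$, a direct simplification collapses the difference to
\[
g_s(N)-g_e(N)=\frac{(\rho/N)\sum_{m=0}^{N-2}a_m\,[\,c(N-1)-c(m)\,]}{D_N-(\rho/N)D_{N-1}}.
\]
The numerator is nonnegative because $c$ is increasing and $m\le N-2<N-1$; the denominator is a positive multiple of $\mathbb{E}(L_N)-\mathbb{E}(L_{N-1})$, which is strictly positive since the mean occupancy of the $M/G/n/n$ loss system is strictly increasing in the admission threshold (equivalently, the partial sums $D_n$ are log-concave, $D_{N-1}^2\ge D_{N-2}D_N$). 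Thus $g_s\ge g_e$, i.e.\ $n_s\le n_e$; put differently, once $N-1\ge n_e$ the marginal customer already has negative utility, so $S^r$ is strictly decreasing beyond $n_e$.

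The second inequality $g_s\le g_m$ is the crux. From (\ref{sr}) one checks that both quantities are difference quotients in $\mathbb{E}(L_N)$: with $\bar c_K:=\mathbb{E}[c(L_K)]$ the mean cost in the threshold-$K$ system,
\[
g_s(N)=\frac{\Phi(N)-\Phi(N-1)}{\mathbb{E}(L_N)-\mathbb{E}(L_{N-1})},\qquad g_m(N)=\frac{\Psi(N)-\Psi(N-1)}{\mathbb{E}(L_N)-\mathbb{E}(L_{N-1})},
\]
where $\Phi(N)=\mathbb{E}(L_N)\,\bar c_{N-1}$ and $\Psi(N)=\mathbb{E}(L_N)\,c(N-1)$. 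Consequently $g_m\ge g_s$ is equivalent to monotonicity of $\hat\delta(N):=\Psi(N)-\Phi(N)=\mathbb{E}(L_N)\,[\,c(N-1)-\bar c_{N-1}\,]=\mathbb{E}(L_N)\sum_{i=1}^2 C_i f_i(N-1)$, where $f_i(K):=K^i-\mathbb{E}(L_K^i)$ and $\mathbb{E}(L_K^i)$ is the $i$-th moment of the threshold-$K$ occupancy. Since $\hat\delta$ is linear in $(C_1,C_2)$ with nonnegative coefficients, it suffices to show that each $f_i$ is nonnegative and nondecreasing; then $\hat\delta(N)$ is a product of nonnegative nondecreasing factors, hence nondecreasing.

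Proving $f_i$ nondecreasing is where the real work lies. Using the mixture recursion $\mathbb{E}(L_{K+1}^i)=(1-\pi)\mathbb{E}(L_K^i)+\pi\,(K+1)^i$ with $\pi=a_{K+1}/D_{K+1}$, monotonicity of $f_i$ is equivalent to the increment of the $i$-th moment being dominated by the increment of the top value, $\mathbb{E}(L_{K+1}^i)-\mathbb{E}(L_K^i)\le (K+1)^i-K^i$. Bounding $(K+1)^i-m^i\le (K+1-m)\,[(K+1)^i-K^i]$ (convexity of $x\mapsto x^i$, so its increments are nondecreasing) cancels the factor $(K+1)^i-K^i$ and reduces the entire family $i\ge1$ to the single, exponent-free estimate
\[
a_{K+1}\sum_{l=0}^{K-1}D_l\le D_K^2 .
\]
I expect this to be the main obstacle. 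The natural route is the log-concavity of the weights $a_m=\rho^m/m!$, which gives $a_{K+1}a_n\le a_K a_{n+1}\le\cdots$ and lets each product $a_{K+1}a_n$ (appearing $K-n$ times after writing $\sum_{l=0}^{K-1}D_l=\sum_{m=0}^{K-1}(K-m)a_m$) be charged against balanced products $a_pa_q$ with $p,q\le K$ inside $D_K^2$; alternatively it can be settled by induction on $K$. With this estimate in hand, $f_i$ is nondecreasing, $\hat\delta$ is nondecreasing, $g_m\ge g_s$, and therefore $n_m\le n_s$, completing the chain $n_m\le n_s\le n_e$.
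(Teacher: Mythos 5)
Your proposal is correct, but it takes a genuinely different route from the paper. The paper disposes of $n_s\le n_e$ the same way you do (the marginal customer's utility is already negative beyond $n_e$), but for $n_m\le n_s$ it does \emph{not} compare the defining ratios pointwise: it computes the common limit $n^i(n+1)-n(n-1)^i$ of both ratios as $\rho\to\infty$, so that $n_m=n_s$ in the limit, and then invokes the opposite monotonicities in $\rho$ established in Theorems \ref{thm2} and \ref{thm3} ($n_s$ decreasing, $n_m$ increasing) to conclude $n_m\le n_s$ for all finite $\rho$. You instead prove the stronger pointwise domination $g_s(N)\le g_m(N)$ for every $N$ and every $\rho$, via the chain: $g_m-g_s$ has the sign of $\hat\delta(N)-\hat\delta(N-1)$ with $\hat\delta(N)=\mathbb{E}(L_N)\sum_iC_if_i(N-1)$; $f_i(K)=K^i-\mathbb{E}(L_K^i)$ is nonnegative and, via the mixture recursion and the convexity bound $(K+1)^i-m^i\le(K+1-m)[(K+1)^i-K^i]$, its monotonicity reduces to the exponent-free inequality $a_{K+1}\sum_{l=0}^{K-1}D_l\le D_K^2$. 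I checked this last step, which you only sketch: writing $\sum_{l=0}^{K-1}D_l=\sum_{m=0}^{K-1}(K-m)a_m$, log-concavity of $a_j=\rho^j/j!$ gives $a_{K+1}a_m\le a_pa_q$ for every ordered pair with $p+q=K+1+m$ and $m+1\le p\le K$; there are exactly $K-m$ such pairs, they all lie in $\{0,\dots,K\}^2$, and they are disjoint across $m$ because the sums $p+q$ differ, so the charge against $D_K^2$ goes through. The trade-off: the paper's argument is short but rests on the heavy monotonicity machinery of Propositions \ref{mlem3} and \ref{rtmm} (the coupling and coefficient-ratio lemmas), whereas yours is self-contained and elementary, yields the quantitative pointwise ordering $g_e\le g_s\le g_m$ rather than only the ordering of the maximizers, and extends verbatim to any polynomial cost with nonnegative coefficients since the reduction is uniform in $i$. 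The only polish needed is to promote the sketched charging argument to a full proof and to note explicitly that $\mathbb{E}(L_N)-\mathbb{E}(L_{N-1})>0$ (your parenthetical $D_{N-1}^2>D_{N-2}D_N$ is the right justification and also follows from log-concavity).
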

\begin{proof}
It follows from (\ref{inb}) and (\ref{bet1}) that
$n_s \leq n_e$ is straightforward, therefore we just need to show $n_m \leq  n_s$.
Since
$$\lim_{\rho\to \infty}\frac{\rho  \biggl[ \frac{\sum_{m=0}^{n} m^i \frac{\rho^m}{m!}}{\sum_{m=0}^{n+1} \frac{\rho^m}{m!}}-\frac{\sum_{m=0}^{n-1} m^i \frac{\rho^m}{m!}}{\sum_{m=0}^{n} \frac{\rho^m}{m!}}\biggl]}{\mathbb{E}(L_{n+1}) -\mathbb{E}(L_{n})}=n^i(n+1)-n(n-1)^i$$
and
$$\lim_{\rho\to \infty} \frac{   \mathbb{E}(L_{n+1})n^{i}- \mathbb{E}(L_{n})(n-1)^{i}}{ \mathbb{E}(L_{n+1})- \mathbb{E}(L_{n})}=n^i(n+1)-n(n-1)^i,$$
by the definition of (\ref{nse}) and (\ref{nrev}), we have  that
$n_m=n_s$ when $\rho \to \infty$.
Using the results of Theorem \ref{thm2} and  Theorem  \ref{thm3}, we know  that $n_s$ is decreasing in $\rho$ while $n_m$ is increasing in $\rho$, which immediately  indicates  $n_m \leq n_s$ for $\rho<\infty$.
\end{proof}

\begin{rmk}\label{rmknm}

The first inequality of Theorem \ref{comnnn} shows that if the system administrator wants to
maximize its own revenue, the entrance price
will be too high such that the  social utility of the system can not be optimal. However, it follows from the proof of Theorem \ref{comnnn} that the capacity gap between the socially optimal strategy and the revenue-maximizing strategy gradually decreases as $\rho$ increases.
An interesting explanation is that as $\rho$ increases,
there will be more customers left in the system and the profits of all new arrivals are closer to the system's pricing, which brings the socially optimal welfare closer to the optimal revenue.
The second inequality indicates us that
under the fully free condition, the system is generally not able to achieve the social optimum.
Thus, appropriate
tolls  are still a good way to reach the  socially optimal threshold.

\end{rmk}

\section{The Unobservable Model}\label{tuom}

In the unobservable model,
we assume that customers can not obtain the real-time number of customers in the system upon arrival.
Customers make decisions based on the information of the system including $\Lambda$, $\mu$, $R$, and the cost structure $C_1 \mathbb{E}(L)+ C_2 \mathbb{E}(L)^2$, where $\mathbb{E}(L)$ is the average number of people in the system.
To consider a symmetric equilibrium, we suppose that customers will join the system with probability $q$ $(0 \leq q \leq 1$) upon arrival.
In this section, we first derive the equilibrium strategy with no price setted. When the system administrator chooses a desired threshold
$n$ and sets the maximum price $P_u$ to ensure this threshold, like in Section \ref{Stackelberg}, the results of this Stackelberg game is
$P_u=R-  [C_1 \mathbb{E}(L)+ C_2 \mathbb{E}(L )^2]$.
Since the system is unobservable, the social welfare and
the revenue have the same expressions, thus we don't need to distinguish them in the following study.

\subsection{Equilibrium}
When $R- [C_1 \rho+C_2 \rho^2] \leq 0$,
suppose that the equilibrium strategy of the customer to join the system is  $q_e$, $0 \leq q_e \leq 1$, then $q_e$ should satisfies
\begin{eqnarray}\label{un1}
  0& =&R-  (C_1 \mathbb{E}(L)+ C_2 \mathbb{E}(L)^2)\nonumber \\
  &= &R- [C_1 (\rho q_e)+C_2 (\rho q_e)^2],\nonumber
\end{eqnarray}
where $\mathbb{E}[L]$ is the expected queue length of $M/G/\infty$ with the effective arrival rate $\Lambda q_e$ and the service rate $\mu$.
Solving the above equation, we see that $q_e=\frac{- C_1 + \sqrt{ C_1 ^2+ 4 R  C_2 }}{2  C_2 \rho}$ when
$R \leq C_1 \rho+C_2 \rho^2$,
which yields the following theorem immediately.
\begin{thm}\label{th7}
In the unobservable infinite-server queue,
the unique equilibrium strategy
for customers, denoted by $q_e$, is given as follows:
\begin{description}
  \item[(a)] If $R- (C_1 \rho+C_2 \rho^2) \geq 0$, $q_e=1$.
  \item[(b)] If $R- (C_1 \rho+C_2 \rho^2) < 0$, $q_e=\frac{- C_1 + \sqrt{ C_1 ^2+ 4 R  C_2 }}{2  C_2 \rho}.$
\end{description}
\end{thm}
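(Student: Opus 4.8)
The plan is to reduce the equilibrium question to the analysis of a single scalar best-response function. First I would observe that when every customer joins independently with probability $q$, the thinning property of the Poisson process turns the system into an $M/G/\infty$ queue with effective arrival rate $\Lambda q$ and mean service time $1/\mu$, whose stationary queue length is Poisson with mean $\mathbb{E}(L)=\rho q$. Because an infinite-server system has no blocking, a tagged customer who joins is served with probability one and collects the full reward $R$, so the expected utility of joining against a population that joins with probability $q$ is
\begin{equation}\nonumber
U(q)=R-\bigl(C_1\rho q+C_2\rho^2 q^2\bigr),
\end{equation}
the cost being evaluated at $\mathbb{E}(L)=\rho q$ exactly as in the displayed computation preceding the theorem.

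Next I would record the structural fact that drives the whole argument: since $C_1,C_2\geq 0$, $\max\{C_1,C_2\}>0$, and $\rho>0$, the function $U$ is continuous and strictly decreasing on $[0,1]$ with $U(0)=R>0$. This is the standard ``avoid-the-crowd'' monotonicity of Hassin and Haviv \cite{Hassinbook} that guarantees a unique symmetric equilibrium. The equilibrium is then characterized by the usual best-response conditions: $q_e=1$ requires $U(1)\geq 0$; an interior $q_e\in(0,1)$ requires the indifference condition $U(q_e)=0$; and $q_e=0$ would require $U(0)\leq 0$, which is impossible because $U(0)=R>0$.

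The proof then splits on the sign of $U(1)=R-(C_1\rho+C_2\rho^2)$. If $U(1)\geq 0$, strict monotonicity gives $U(q)>U(1)\geq 0$ for every $q<1$, so joining with probability one is the unique best response to any $q<1$; hence $q_e=1$ is the unique equilibrium, which is case (a). If $U(1)<0$, then $q=1$ is not an equilibrium (balking is strictly better), $q=0$ is excluded as above, and since $U(0)=R>0>U(1)$ with $U$ continuous and strictly decreasing, the intermediate value theorem yields a unique interior root $q_e\in(0,1)$ of $U(q)=0$. Solving the quadratic $C_2\rho^2 q^2+C_1\rho q-R=0$ and retaining the positive root gives $q_e=\frac{-C_1+\sqrt{C_1^2+4RC_2}}{2C_2\rho}$, which is case (b).

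The computations are routine; the points that require care are conceptual rather than technical. The first is justifying that the cost seen by a deviating customer is evaluated at the mean-field value $\mathbb{E}(L)=\rho q$ --- i.e.\ that one customer's decision does not perturb the stationary mean faced by the population --- which is the standard self-consistency assumption underlying symmetric equilibria in unobservable queues. The second, which I expect to be the only genuinely delicate point, is the degenerate boundary $C_2=0$ (admissible since only $\max\{C_1,C_2\}>0$ is required): there the closed form in (b) must be read as its limit $q_e=R/(C_1\rho)$, obtained directly from the now-linear equation $U(q)=0$, and I would either note this limit explicitly or dispose of the linear subcase in one line.
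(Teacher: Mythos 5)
Your proof is correct and follows essentially the same route as the paper: both reduce the symmetric equilibrium to the mean-field indifference condition $R = C_1\rho q + C_2(\rho q)^2$ for the $M/G/\infty$ queue with thinned arrival rate $\Lambda q$ and solve the resulting quadratic, your monotonicity and intermediate-value argument merely making explicit the uniqueness reasoning the paper leaves implicit. Your observation that the closed form in (b) must be read as its limit $R/(C_1\rho)$ when $C_2=0$ is a legitimate point the paper does not address, but it does not affect the substance.
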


\subsection{Revenue (Social) Optimality}

Like in the observable setting,
when the system administrator sets an entrance fee $P_u$, for individual
decision making, this is equivalent to reducing the service benefit from $R$ to $R-P_u$, which changes the equilibrium probability of joining the system.
Let $q(P_u)$  denote the joining probability associated with a given fee $P_u$ and  without confusion, we use $q$ to represent. Then, we have the expression  of revenue
\begin{equation}\label{qrev}
  S(q) = q\Lambda  [R- (C_1 (\rho q)+C_2 (\rho q)^2)],
\end{equation}
where $P_u=R- [C_1\rho q+C_2 (\rho q)^2]$.
When  $R \leq (2 \rho C_1 +3 \rho^2 C_2 )$,
by differentiating $S(q)$ with respect to $q$ and finding the roots that meet the restriction conditions,
we have
$$q=\frac{- C_1  + \sqrt{C_1^2+ 3 R  C_2}}{3  C_2 \rho }.$$
Summarizing the above discussions, we could naturally
develop the following theorem.
\begin{thm}\label{th8}
In the unobservable infinite-server queue,
let $\widetilde{q}$ be the optimal joining probability of revenue and  $\widetilde{P}_u$ be an optimal entrance price, then the following statements hold.
\begin{description}
  \item[(a)] If $R \geq (2 \rho C_1 +3 \rho^2 C_2 )$, we have
      \begin{itemize}
        \item[1.] $\widetilde{q}=1$;
        \item[2.] $S(\widetilde{q})=\mu (R \rho - C_1  \rho^2 -  C_2 \rho^3)$;
        \item[3.] $\widetilde{P}_u=R- C_1 \rho -C_2 \rho^2.$
      \end{itemize}
  \item[(b)]  If $R < (2 \rho C_1 +3 \rho^2 C_2 )$, we have
       \begin{itemize}
        \item[1.] $\widetilde{q}=\frac{- C_1  + \sqrt{C_1^2+ 3 R  C_2}}{3  C_2 \rho }$;
        \item[2.]  $S(\widetilde{q})= \frac{-2C_1^3-9RC_1C_2+(2 C_1^2+6RC_2)\sqrt{C_1^2+3RC_2}}{27 C_2^2}\mu$;
        \item[3.] $\widetilde{P}_u=\frac{6 R C_2+C_1^2-C_1 \sqrt{C_1^2+3RC_2}}{9 C_2}.$
        \end{itemize}
\end{description}

\end{thm}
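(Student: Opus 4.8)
The plan is to establish Theorem~\ref{th8} by a direct single-variable optimization of the revenue function $S(q)$ over the interval $[0,1]$, treating the two cases according to whether the unconstrained maximizer falls inside or outside this interval. Expanding $(\ref{qrev})$, we have $S(q)=\mu\bigl[R\rho q - C_1\rho^2 q^2 - C_2\rho^3 q^3\bigr]$ (using $\Lambda=\mu\rho$), a cubic in $q$ with a negative leading coefficient whenever $C_2>0$. Differentiating gives $S'(q)=\mu\bigl[R\rho - 2C_1\rho^2 q - 3C_2\rho^3 q^2\bigr]$, and the key observation is that $S'(0)=\mu R\rho>0$ while the coefficient of $q^2$ is $-3C_2\rho^3\le 0$, so on the relevant range $S'$ is strictly decreasing (when $C_2>0$) from a positive value; hence $S$ is strictly concave and has a unique interior stationary point. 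This immediately yields uniqueness of the optimal $\widetilde q$ and justifies the ``find the root'' step in the text.

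\textbf{First} I would locate the unconstrained critical point by solving $S'(q)=0$, i.e. $3C_2\rho^2 q^2 + 2C_1\rho q - R=0$ (after dividing by $\rho\mu>0$). The quadratic formula gives the positive root $q^\ast=\frac{-C_1+\sqrt{C_1^2+3RC_2}}{3C_2\rho}$, which is exactly the candidate in part~(b). \textbf{Next} I would determine when $q^\ast\le 1$: since $S'$ is strictly decreasing, $q^\ast\ge 1$ is equivalent to $S'(1)\ge 0$, and a direct computation of $S'(1)=\mu\rho\bigl[R-2C_1\rho-3C_2\rho^2\bigr]$ shows that the threshold is precisely $R\ge 2\rho C_1+3\rho^2 C_2$. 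This is the dividing line between cases (a) and (b) in the statement. In case (a), $S$ is increasing throughout $[0,1]$, so the constrained maximum is attained at the boundary $\widetilde q=1$; substituting $q=1$ into $S(q)$ and into the fee formula $P_u=R-[C_1\rho q + C_2(\rho q)^2]$ gives the three stated quantities.

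\textbf{Then} in case (b) the interior maximizer $\widetilde q=q^\ast$ is the optimizer, and the remaining work is to verify the two closed-form expressions for $S(\widetilde q)$ and $\widetilde P_u$. For $\widetilde P_u=R-C_1\rho\widetilde q - C_2\rho^2\widetilde q^2$ I would substitute $q^\ast$, use the defining relation $3C_2\rho^2(q^\ast)^2=R-2C_1\rho q^\ast$ to eliminate the quadratic term, and simplify; this should collapse to $\frac{6RC_2+C_1^2-C_1\sqrt{C_1^2+3RC_2}}{9C_2}$. For $S(\widetilde q)$ the cleanest route is to write $S(q^\ast)=\mu\bigl[R\rho q^\ast - C_1\rho^2(q^\ast)^2 - C_2\rho^3(q^\ast)^3\bigr]$ and again repeatedly use $S'(q^\ast)=0$ to reduce the cubic and quadratic powers of $q^\ast$ to linear ones, so that everything is expressed through $\sqrt{C_1^2+3RC_2}$ before collecting terms.

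\textbf{The main obstacle} I anticipate is purely the algebraic reduction in case (b): substituting the surd $q^\ast$ into a cubic and matching it to the compact form $\frac{-2C_1^3-9RC_1C_2+(2C_1^2+6RC_2)\sqrt{C_1^2+3RC_2}}{27C_2^2}$ requires careful bookkeeping, and the trick of using $S'(q^\ast)=0$ to lower the degree is essential to avoid an unwieldy expansion. A secondary, more subtle point is the degenerate case $C_2=0$ (allowed since the hypothesis is only $\max\{C_1,C_2\}>0$): there $S(q)$ is merely quadratic, the candidate root and case threshold must be recovered as the limit $C_2\to 0$, and one should check separately that the stated formulas, interpreted as limits, remain the correct optimizers; I would handle this either by a short limiting argument or by noting that with $C_2=0$ the function $S(q)=\mu\rho[Rq-C_1\rho q^2]$ is concave with maximizer $q=R/(2C_1\rho)$, matching the case split $R\gtrless 2\rho C_1$.
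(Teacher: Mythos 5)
Your proposal is correct and follows essentially the same route as the paper, which simply differentiates $S(q)=q\Lambda\bigl[R-(C_1\rho q+C_2(\rho q)^2)\bigr]$, solves $S'(q)=0$ for the admissible root, and splits cases according to whether that root lies in $[0,1]$. You supply details the paper leaves implicit — strict concavity of $S$ on $[0,1]$ (hence uniqueness), the equivalence of the case boundary with $S'(1)\ge 0$, the algebraic verification of the closed forms, and the degenerate case $C_2=0$ handled as a limit — all of which check out.
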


\begin{rmk}
$\mathbf{(a)}$
It follows from the results of Theorem \ref{th7} and Theorem \ref{th8} that $\widetilde{q} \leq q_e$ and $S(q_e)=0$, which means that for the unobservable case, the system is still  not able to achieve the social optimum under the fully free condition.
In fact, a customer who decides to join the system would impose negative externalities on future arrivals.
Thus, appropriate tolls are conducive to achieving the social optimality.

$\mathbf{(b)}$
It follows from Theorem \ref{th8}(b) that
if $R < (2 \rho C_1 +3 \rho^2 C_2 )$, both $\widetilde{P}_u$ and $S(\widetilde{q})$ are a constant with respect to $\Lambda$ while $S(\widetilde{q})$ is increasing in $R$ and $\mu$. This implies that  the change in the arrival flow of customers does not affect the revenue and there is no need to adjust the entrance price.
If $R \geq (2 \rho C_1 +3 \rho^2 C_2 )$, $\widetilde{P}_u$ is decreasing in $\rho$ but $S(\widetilde{q})$ is increasing in $\rho$ and $R$, respectively.
This shows that  when $R \geq (2 \rho C_1 +3 \rho^2 C_2 )$ and $\rho$ increases, the system administrator should reduce prices to increase the effective arrival rate and thereby increase the overall utility. On the whole, increasing individual reward ($R$) will always increase the optimal social (revenue) utility, which has the analogous conclusions with the observable case.
\end{rmk}

\begin{rmk}\label{rmk7l}

$\mathbf{(a)}$
The relationship of size between $S^r_m(n_m)$ and $S(\widetilde{q})$ can not be completely determined. In fact, if
$ \mathbb{E}(L_{n_m})\leq (n_m-1)$, it follows from (\ref{rev1}) and (\ref{qrev}) that  $S^r_m(n_m) \leq S(\widetilde{q})$. However, as $\rho \to \infty$,
for $R=20,\mu=1,C_1=1,C_2=0$, we have $S(\widetilde{q})=\frac{\mu R^2}{4 C_1}=100$,
$S^r_m(n_m)=\mu n_m [ R-  (C_1  (n_m-1)] > 10 [ 20 - (10-1)]= 110>S(\widetilde{q})$.
Thus, under the profit-maximizing admission price,
whether the system administrator publishes the real-time number of customers needs to be judged according to specific parameters. Similarly, the relationship of size between $S^r_m(n_s)$ and $S(\widetilde{q})$ can not be completely determined.

$\mathbf{(b)}$ For any $n \leq n_e$, we have
\begin{eqnarray*}
\Lambda \sum_{m=0}^{n-1} \mathbb{P}(N=m)\biggl[R- (C_1 m+ C_2 m^2)\biggl]&\geq& \Lambda \sum_{m=0}^{n-1} \mathbb{P}(N=m)\biggl[R- (C_1 (n-1)+ C_2 (n-1)^2)\biggl] \\
  &=& \Lambda \mathbb{P}(N<n)[R- (C_1 (n-1)+ C_2 (n-1)^2)] \\
  &=& \mu \mathbb{E}(L_{n})[R- (C_1 (n-1)+ C_2 (n-1)^2)],
\end{eqnarray*}
which, combining with (\ref{bet1}) and (\ref{rts}), implies that $S^r(n_m)> S^r_m(n_m)$. This shows that
in order to achieve the revenue-maximizing objective,
if the system administrator chooses to release the real-time number of customers, i.e., $S^r_m(n_m) \geq S(\widetilde{q})$,  this behavior for the public should also be actively advocated because $S^r(n_m)\geq S^r_m(n_m)\geq S(\widetilde{q})$ implies  $S^r(n_m)\geq S(\widetilde{q})$.
\end{rmk}

\section{Numerical Comparisons}

In this section, we present some numerical results for both  observable and unobservable models.
We mainly focus on comparing the social welfare and revenue under these two models to gain insight into some valuable results that have been or have not been proven.  Finally, we also give a simple example to calculate each quantity.

In Figure \ref{fig1}, we compare the social welfare with different $\Lambda$ ($\rho$ with $\mu=1$). From the figure, we could observe the following facts.
\begin{itemize}
  \item[1.] $S^r(n_s)$ and $S^r(n_m)$ are increasing in $\Lambda$, respectively. $S^r(n_s)$ is always bigger than $S(\widetilde{q})$.
      In fact, when $C_2=0$, we have $\rho q=\sum_{m=0}^{\infty} m \mathbb{P}(N=m)$ in (\ref{qrev}). This, combining with expression of (\ref{bet1}), shows that
      we can think of the
 unobservable social welfare problems as an observable (long-run) average reward model in the theory of Markov decision processes (MDPs) with the stochastic Markov strategy, see Chapter 11 in Puterman \cite{Puterman}. Note that $n_s$ is the deterministic stationary optimal strategy in this  average reward model, thus we have $S(\widetilde{q}) \leq S^r(n_s)$.
  \item[2.] $S^r(n_e)$  increases first and then decreases with respect to $\Lambda$.  When $\Lambda$ is relatively large, from the figure, a reasonable toll is a better choice to achieve the social optimum, which coincides with the actual strategy adopted. When $\Lambda$ is relatively small, even if there is no charge, $S^r(n_e)$ is closer to the social optimal welfare. The reasons for this phenomenon have been analyzed in Remark \ref{rmkne}.
  \item[3.] When $\Lambda$ gradually increases, $S^r(n_s)$ and $S^r(n_m)$ get closer and closer until they coincide. In fact, it follows from the proof of Theorem \ref{comnnn} that this is caused by   the gradual approach of the two thresholds. Therefore, when  $\Lambda$ is large, the optimal strategy of the system administrator is gradually in line with the goal of  social maximization.

   \item[4.]
   When $\Lambda$ is relatively small, we can see that $S^r(n_m)$ is less than $S(\widetilde{q})$.
     At this time, under the revenue-maximizing
admission fee, not providing the number of customers  is good for social welfare.
When $\Lambda$ is relatively large, $S^r(n_m)>S(\widetilde{q})$. At this time,  providing the real-time number of customers is beneficial to the social welfare.
In short, whether to publish the real-time
number of customers needs depend on the choice of real parameters.
\end{itemize}

In Figure \ref{fig2}, we provide the revenue with different $\Lambda$. Observing the figure, we have the following statements.
\begin{itemize}
\item[1.] $S^r_m(n_m)$ and $S(\widetilde{q})$ are increasing in $\Lambda$, respectively and after a simple judgment, $S(\widetilde{q})$ is a constant when $\Lambda \geq 7.5$.
      From the figure, we can also see that
      $S^r_m(n_s)$ is increasing in $\Lambda$ although we can not prove it theoretically.
      An intuitive reason is that $n_s$ and $n_m$ gradually approach as $\Lambda$ increases.

\item[2.] When $\Lambda$ is relatively small,
  $S^r_m(n_m)<S(\widetilde{q})$, which means that under the revenue-maximizing admission fee, the system administrator has no incentive to publish the real-time number of customers. Meanwhile, we also see that $S^r_m(n_s)<S(\widetilde{q})$, so under the social welfare maximization threshold,
     the unobservable case will also have greater revenue than  the observable case. That is to say,  it is beneficial for the system administrator not to publish  the real-time information in this circumstance.
      \item[3.]
      When $\Lambda$ gradually increases, $S^r_m(n_m)$ and $S^r_m(n_s)$ will get closer and closer and will exceed $S(\widetilde{q})$. Thus, for sufficiently large $\Lambda$,
      the system administrator is willing to publish real-time information under various optimal thresholds. In fact, we can see that there exists $\Lambda$ such that $S^r_m(n_m)>S(\widetilde{q})>S^r_m(n_s)$. Under this condition, the decision of the system administrator is the opposite to the decision with the social welfare-maximizing objective. Therefore, in order to maximize the optimal social welfare,
      it is necessary to induce
      the system administrator to publish
      the real-time information.
\end{itemize}

\begin{figure}
\centering
\includegraphics[width=0.6\textwidth]{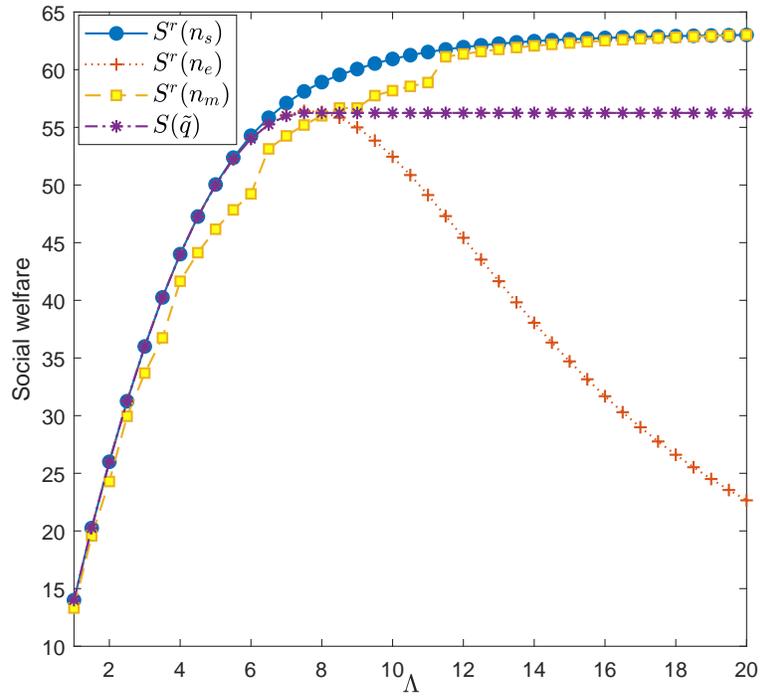}
\caption{Comparison of the social welfare  per time unit vs. $\rho$ for $\mu = 1, R = 15, C_1=1,C_2=0.$}\label{fig1}
\end{figure}

\begin{figure}
\centering
\includegraphics[width=0.6\textwidth]{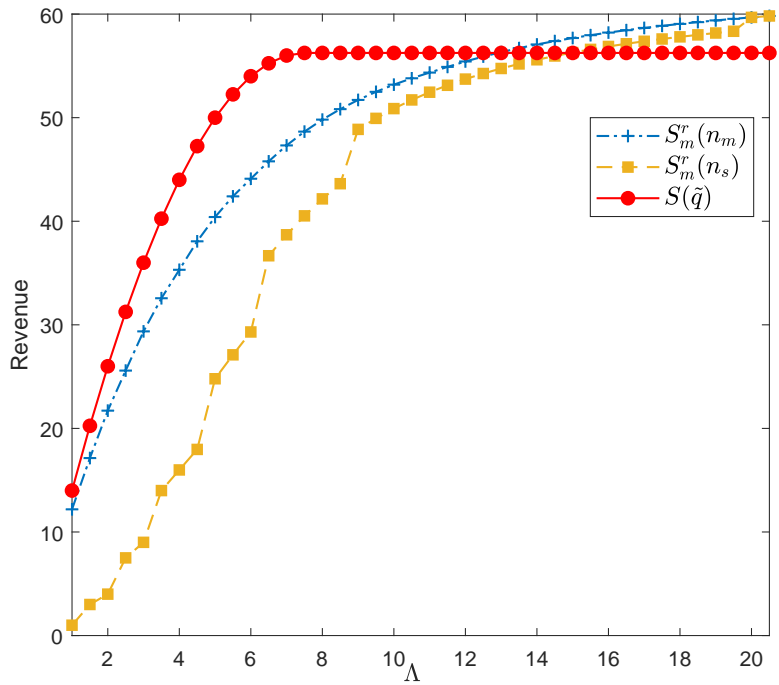}
\caption{Comparison of optimal revenue per time unit vs. $\rho$ for $\mu = 1, R = 15, C_1=1,C_2=0.$}\label{fig2}
\end{figure}

Finally,  we also  complement a concrete example to give a numerical solution of various quantities.

\begin{emp}
Suppose that potential customers arrive at a system 20 persons per minute, the average sojourn time is 60 minutes, the service reward $R=400$ and the cost is $C_1=0,C_2=0.01$.  Then, $\rho=1200$ and using the results of
Section \ref{tom}
and Section \ref{tuom}, we have $n_e=201$, $n_s=116$, $n_m=116$, $q_e=\frac{1}{6}$, $q_s=\frac{\sqrt{3}}{18}$, $S^r(n_s)=517.64$, $S^r(n_e)=2.66$, $\widetilde{P}_o=265.44$, $S^r_m(n_m)=512.71$, $\widetilde{P}_u=266.67$, $S(q_e)=0,S(\widetilde{q})=513.20$.
\end{emp}

In this example, the social optimal threshold and the revenue optimal threshold are the same. Comparing $S^r(n_s)$ and $S^r(n_e)$, whether it is a free system that controls the number of people or a toll system that controls the threshold through charging, the social  welfare will be significantly improved.
\section{Proofs of the Main Results}\label{mainr}

This part is devoted to the proofs of Theorem \ref{thm2} and Theorem \ref{thm3}. To begin with,  we need to state and prove some lemmas and propositions.

\begin{lem}\label{lem4}
Let function $f(x)=b_n x^n+b_{n-1}x^{n-1}+ \dots+ b_1 x + b_0$ $(b_i>0)$ and $g(x)=a_n x^n+a_{n-1}x^{n-1}+ \dots+ a_1 x + a_0$ $(a_i>0)$. Then, if $\frac{b_n}{a_n} \geq\frac{b_{n-1}}{a_{n-1}} \geq \dots \geq \frac{b_{0}}{a_{0}}$,
$\frac{f(x)}{g(x)}$ is increasing in $x$. In particular, when one of the above inequalities is strict, $\frac{f(x)}{g(x)}$ is strictly increasing in $x$.
\end{lem}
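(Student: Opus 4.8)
The plan is to work with the derivative of the ratio on the domain $x>0$. Since every $a_i>0$, we have $g(x)>0$ for all $x>0$, so $\frac{f}{g}$ is differentiable there and the sign of its derivative equals the sign of the numerator $h(x):=f'(x)g(x)-f(x)g'(x)$. Thus it suffices to show $h(x)\ge 0$ for $x>0$, with strict positivity under a strict ratio inequality.

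First I would expand $h$ as a double sum. Writing $f(x)=\sum_{i=0}^n b_i x^i$ and $g(x)=\sum_{j=0}^n a_j x^j$, and noting that the $i=0$ term of $f'$ and the $j=0$ term of $g'$ vanish, one gets
\begin{equation}\nonumber
h(x)=\sum_{i=0}^n\sum_{j=0}^n (i-j)\, b_i a_j\, x^{i+j-1}.
\end{equation}
The diagonal terms $i=j$ contribute nothing, so I would pair each off-diagonal index $(i,j)$ with its transpose $(j,i)$ for $i<j$, obtaining
\begin{equation}\nonumber
h(x)=\sum_{0\le i<j\le n} (i-j)\bigl(b_i a_j - b_j a_i\bigr)\, x^{i+j-1}.
\end{equation}

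The key step is to read off the sign of each summand from the hypothesis. For $i<j$ the chain $\frac{b_n}{a_n}\ge\cdots\ge\frac{b_0}{a_0}$ gives $\frac{b_i}{a_i}\le\frac{b_j}{a_j}$, and since $a_i,a_j>0$ this is equivalent to $b_i a_j - b_j a_i\le 0$. Combined with $i-j<0$ and $x^{i+j-1}>0$, every summand is non-negative, so $h(x)\ge0$ on $x>0$ and $\frac{f}{g}$ is increasing. For the strict statement, a strict inequality $\frac{b_k}{a_k}>\frac{b_{k-1}}{a_{k-1}}$ makes the $(k-1,k)$ summand strictly positive for $x>0$ (here $b_{k-1}a_k - b_k a_{k-1}<0$ while $(k-1)-k<0$), whence $h(x)>0$ and $\frac{f}{g}$ is strictly increasing.

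I do not expect a genuine obstacle here; the only point requiring care is the bookkeeping in the pairing argument, namely correctly discarding the vanishing diagonal and confirming that the transposed indices $(i,j)$ and $(j,i)$ combine into the single antisymmetric term $(i-j)(b_i a_j-b_j a_i)x^{i+j-1}$. An alternative, more conceptual route I would keep in reserve is to write $\frac{f}{g}=\sum_i \frac{b_i}{a_i}\,w_i(x)$ with weights $w_i(x)=\frac{a_i x^i}{\sum_j a_j x^j}$ summing to one; since $w_i(x)/w_k(x)=\frac{a_i}{a_k}x^{i-k}$ is increasing in $x$ for $i>k$, the weights shift toward larger indices as $x$ grows, and averaging the increasing sequence $\frac{b_i}{a_i}$ against them yields monotonicity. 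The derivative computation is more self-contained, so I would present that as the main argument.
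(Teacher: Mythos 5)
Your proof is correct and follows essentially the same route as the paper's: both differentiate the quotient, expand the numerator $f'g-fg'$ as a double sum, and pair the antisymmetric cross terms $(i,j)$ and $(j,i)$ so that the hypothesis $\frac{b_j}{a_j}\geq\frac{b_i}{a_i}$ for $j>i$ makes each pair contribute non-negatively. The only difference is organizational -- the paper first collects terms by the power of $x$ and then pairs within each power, whereas you pair over index pairs directly -- and your bookkeeping is arguably the cleaner of the two.
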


\begin{proof}

Through differentiation with respect to $x$, we have
\begin{align}
  \biggl(\frac{f(x)}{g(x)}\biggl)' &= \frac{(\sum_{k=1}^n kb_kx^{k-1})(\sum_{k=0}^n  a_{k} x^k)-(\sum_{k=0}^n b_kx^{k})(\sum_{k=1}^n k a_{k} x^{k-1})}{g(x)^2} \nonumber \\
   &= \frac{\sum_{m=0}^{2n-1}\biggl[\sum_{\substack{i+j=m\\ 0\leq i\leq n-1 \\ 0 \leq j\leq n}}(i+1) b_{i+1} a_j x^m-\sum_{\substack{i+j=m\\ 0\leq i\leq n \\ 0 \leq j\leq n-1}}(j+1) b_i a_{j+1} x^m\biggl]}{g(x)^2} \nonumber \\
      &=  \frac{\sum_{m=0}^{2n-1}\biggl[\sum_{i= \max\{m-n,0\}}^{\min\{m,n-1\}}(i+1) b_{i+1} a_{m-i} x^m-\sum_{i= \max\{m-n,0\}}^{\min\{m,n-1\}}(i+1) b_{m-i} a_{i+1} x^m\biggl]}{g(x)^2}. \label{inee1}
\end{align}
Since  $\frac{b_{i+1}}{a_{i+1}} \geq \frac{b_{m-i}}{a_{m-i}}$ for $i+1>m-i$, we have $[(i+1) b_{i+1} a_{m-i}+(m-i)b_{m-i} a_{i+1}]-[(i+1) b_{m-i} a_{i+1}+(m-i) b_{i+1} a_{m-i}]=(2i+1-m)b_{i+1} a_{m-i}-(2i+1-m)b_{m-i} a_{i+1} \geq 0$.
This immediately implies that
\begin{equation}\label{i2}
\sum_{i= \max\{m-n,0\}}^{\min\{m,n-1\}}(i+1) b_{i+1} a_{m-i} -(i+1) b_{m-i} a_{i+1} \geq 0.
\end{equation}
Thus, $\frac{f(x)}{g(x)}$ is increasing in $x$.
Obviously, when one of the inequalities is strict,  the monotonicity is also strict from the (\ref{inee1}) and (\ref{i2}).
\end{proof}

\begin{pro}\label{mlem3}
Let $f_n(\rho)=\sum_{m=0}^{n} \frac{\rho^m}{m!}$ for $n \geq 0$ and $f_{-1}(\rho)=0$. Then, the following statements hold.
\begin{description}
\item[(a)] $\frac{  f_{n-t}(\rho)f_{n}(\rho)- f_{n-1-t}(\rho)f_{n+1}(\rho)}{ f_{n}(\rho)^2-f_{n-1}(\rho)f_{n+1}(\rho)}$ is strictly increasing in $n$ for $t=1,2, \dots, n$.
\item[(b)] $\frac{ \rho^t f_{n-t}(\rho) f_{n}(\rho) -\rho^t f_{n-1-t}(\rho)f_{n+1}(\rho)}{
f_{n}(\rho)^2- f_{n-1}(\rho)f_{n+1}(\rho)}$  is strictly increasing in $\rho$ for $t=1,2, \dots, n$.
\item[(c)] $\frac{\rho  \biggl[ \frac{\sum_{m=0}^{n} m^i \frac{\rho^m}{m!}}{\sum_{m=0}^{n+1} \frac{\rho^m}{m!}}-\frac{\sum_{m=0}^{n-1} m^i \frac{\rho^m}{m!}}{\sum_{m=0}^{n} \frac{\rho^m}{m!}}\biggl]}{\mathbb{E}[L_{n+1}  ]-\mathbb{E}[L_{n} ]}$
is strictly increasing in $n$ and $\rho$, respectively, for $i=1,2,3,\dots$.
\end{description}
\end{pro}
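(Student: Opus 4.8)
The plan is to reduce part (c) to parts (a) and (b), and then to prove (a) and (b) from a single explicit coefficient formula. For the reduction I would use the falling-factorial identity
\[
\sum_{m=0}^{n} m^{\underline{j}}\frac{\rho^m}{m!}=\rho^{j}f_{n-j}(\rho),\qquad m^{\underline j}:=m(m-1)\cdots(m-j+1),
\]
(with $f_m:=0$ for $m<0$), which follows from $m^{\underline j}/m!=1/(m-j)!$ and reindexing. Since $m^{i}=\sum_{j=1}^{i}S(i,j)\,m^{\underline j}$ with $S(i,j)$ the Stirling numbers of the second kind, this gives $\sum_{m=0}^{n}m^{i}\rho^m/m!=\sum_{j=1}^{i}S(i,j)\rho^{j}f_{n-j}(\rho)$. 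Abbreviating $N_n^{(t)}:=f_{n-t}f_n-f_{n-1-t}f_{n+1}$ and $D_n:=f_n^2-f_{n-1}f_{n+1}$ and using $\mathbb{E}(L_n)=\rho f_{n-1}/f_n$ from (\ref{5}), the denominator of (c) is $\rho D_n/(f_nf_{n+1})$ and its numerator is $\rho\bigl(\sum_{j=1}^{i}S(i,j)\rho^{j}N_n^{(j)}\bigr)/(f_nf_{n+1})$, so the whole expression in (c) equals $\sum_{j=1}^{i}S(i,j)\,\rho^{j}N_n^{(j)}/D_n$. As $S(i,j)>0$ for $1\le j\le i$, this realizes (c) as a positive combination of the quantities in (b), and (since $\rho^{j}$ does not depend on $n$) of those in (a), with $t=j$; the monotonicity then passes termwise. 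Note $D_n=N_n^{(0)}$, so (a) and (b) really concern the single family $\{N_n^{(t)}\}_{t\ge 0}$.

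The engine for (a) and (b) is a closed form for the coefficient $[\rho^k]N_n^{(t)}$ of $\rho^k$ in $N_n^{(t)}$. Writing each product as a truncated partial sum of a binomial row, $f_af_b=\sum_k(\rho^k/k!)\sum_p\binom{k}{p}$, and cancelling the overlapping index ranges, I expect
\[
[\rho^k]\,N_n^{(t)}=\frac{1}{k!}\Bigl(\binom{k}{n-t}-\binom{k}{n+1}\Bigr)\qquad (n-t\le k\le 2n-t),
\]
and $0$ otherwise; in particular $[\rho^k]D_n=\frac{1}{k!}\binom{k}{n}\frac{2n+1-k}{n+1}$. Throughout this range $k\le 2n-t$ and $n+1>k/2$, so unimodality of the binomial row gives $\binom{k}{n-t}>\binom{k}{n+1}$ and hence strict positivity of every coefficient --- precisely the hypothesis needed to apply Lemma \ref{lem4}.

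For (b) I would divide $\rho^tN_n^{(t)}$ and $D_n$ by $\rho^{n}$ to obtain two degree-$n$ polynomials with positive coefficients $b_l,a_l$ $(l=0,\dots,n)$. A short calculation collapses the coefficient ratio to
\[
\frac{b_l}{a_l}=\frac{n!}{(n-t)!}\cdot\frac{(n+1)\bigl[(n+1)^{\underline{t+1}}-l^{\underline{t+1}}\bigr]}{(n+1)^{\underline{t+1}}\,(n+1-l)},
\]
that is, up to the positive constant $n!/(n-t)!$, the secant slope $\bigl(P(n+1)-P(l)\bigr)/\bigl((n+1)-l\bigr)$ of the falling factorial $P(x)=x^{\underline{t+1}}$. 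Since $P\equiv 0$ on $\{0,\dots,t\}$ and $P$ is convex on $[t,\infty)$ (a standard computation gives $P''\ge 0$ there), this secant slope is non-decreasing in $l$ and strictly increases across $l=t$; thus $b_n/a_n\ge\cdots\ge b_0/a_0$ with at least one strict inequality, and Lemma \ref{lem4} yields that $\rho^tN_n^{(t)}/D_n$ is strictly increasing in $\rho$. This is (b).

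Part (a) is where I expect the real difficulty, because Lemma \ref{lem4} controls only the variable $\rho$, not the index $n$. I would first put the ratio in telescoping form: with $a_m=\rho^m/m!$ and $g_n:=n!\,f_n(\rho)/\rho^{n}$ one checks $D_n=a_na_{n+1}(g_{n+1}-g_n)$ and $N_n^{(t)}=a_{n-t}a_{n+1}(g_{n+1}-g_{n-t})$, so that
\[
\frac{N_n^{(t)}}{D_n}=\frac{n!}{(n-t)!\,\rho^{t}}\cdot\frac{g_{n+1}-g_{n-t}}{g_{n+1}-g_n},
\]
where $(g_n)$ obeys the clean recursion $g_{n+1}=\frac{n+1}{\rho}g_n+1$, $g_0=1$, which makes it strictly increasing. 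It then remains to show this quantity increases in $n$ --- equivalently, that the polynomial $N_{n+1}^{(t)}D_n-N_n^{(t)}D_{n+1}$ has nonnegative coefficients, so is positive for $\rho>0$. I would establish this discrete convexity either by induction on $n$ through the $g$-recursion, tracking the growth of the successive differences $g_{m+1}-g_m$, or by a direct sign analysis of the cross-difference via the binomial formula of the preceding step. Proving this $n$-monotonicity is the crux of the proposition.
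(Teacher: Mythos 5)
Your reduction of (c) to (a) and (b) via $m^i=\sum_j S(i,j)\,m^{\underline{j}}$ and $\sum_{m\le n}m^{\underline{j}}\rho^m/m!=\rho^{j}f_{n-j}(\rho)$ is exactly the decomposition the paper uses to obtain its display (\ref{zhan}) (the paper leaves the positive coefficients $a_j$ unnamed rather than identifying them as Stirling numbers). Your part (b) is correct and is a genuinely cleaner route than the paper's: the coefficient formula $[\rho^k]N_n^{(t)}=\frac{1}{k!}\bigl(\binom{k}{n-t}-\binom{k}{n+1}\bigr)$ agrees with the paper's $F(m,t)$, and recognizing the coefficient ratio $b_l/a_l$ as a secant slope of the convex falling factorial $x^{\underline{t+1}}$ (all of whose roots, hence those of its second derivative, lie in $[0,t]$) replaces the paper's laborious case-by-case verification of the chain of inequalities culminating in (\ref{ndca}) with a one-line convexity argument; both routes then finish with Lemma \ref{lem4}.

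Part (a), however, contains a genuine gap, and it is the crux of the whole proposition. Your telescoping identities $D_n=a_na_{n+1}(g_{n+1}-g_n)$ and $N_n^{(t)}=a_{n-t}a_{n+1}(g_{n+1}-g_{n-t})$ are correct, and the target you identify --- coefficientwise nonnegativity (with some strictness) of $N_{n+1}^{(t)}D_n-N_n^{(t)}D_{n+1}$ --- is equivalent, after an index shift and removal of a common factor $f_n$, to the paper's inequality (\ref{ieq}). But you stop at ``I would establish this by induction through the $g$-recursion or by a direct sign analysis,'' which is a plan, not a proof, and this step is where essentially all of the paper's labor is concentrated. The positivity is not a routine discrete-convexity fact: in the paper's notation the coefficient difference collapses to
\[
G(m)-H(m)=\frac{(3n-t-m)(t+1)-(n+1)}{(n-t)!\,(n+1)!\,(m-2n+t+1)!}+\frac{\mathbf{1}_{\{m\ge 2n\}}}{(m-2n)!\,n!\,(n+1)!},
\]
whose first term turns negative for $m\ge 2n$ and must be shown to be dominated by the second; the paper needs a separate monotonicity-in-$m$ argument together with an evaluation at the extreme index $m=3n-2-t$ to close this, plus side treatments of the boundary cases $t=1$ and $t=n-1$. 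Until you supply that (or an equivalent) argument, part (a) --- and with it the $n$-monotonicity asserted in part (c) --- remains unproven.
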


\begin{proof}
(a) We just need to show $\frac{  f_{n-t}(\rho)f_{n}(\rho)- f_{n-1-t}(\rho)f_{n+1}(\rho)}{ f_{n}(\rho)^2-f_{n-1}(\rho)f_{n+1}(\rho)}>\frac{  f_{n-1-t}(\rho)f_{n-1}(\rho)- f_{n-2-t}(\rho)f_{n}(\rho)}{ f_{n-1}(\rho)^2-f_{n-2}(\rho)f_{n}(\rho)}$, which holds if and only if for $n\geq t+1$,
\begin{eqnarray}\label{ieq}
\begin{aligned}
   &f_{n-t}(\rho)f_{n-1}(\rho)^2+f_{n-2-t}(\rho)f_{n}(\rho)^2+f_{n-1-t}(\rho)f_{n-2}(\rho)f_{n+1}(\rho) \\
    > \ & f_{n-1-t}(\rho)f_{n-1}(\rho)f_{n}(\rho)+f_{n-t}(\rho)f_{n-2}(\rho)f_{n}(\rho)+f_{n-2-t}(\rho)f_{n-1}(\rho)f_{n+1}(\rho).
\end{aligned}
\end{eqnarray}
Denoted the coefficients of the $m$-th power  on both sides of inequality (\ref{ieq}) by
\begin{equation}\label{cG}
G(m)  \triangleq \sum_{\substack{i+j+k=m\\ 0\leq i\leq n-t \\ 0 \leq j\leq n-1 \\ 0 \leq k\leq n-1  }}\frac{1}{i! j!k!}+\sum_{\substack{i+j+k=m\\ 0\leq i\leq n-2-t \\ 0 \leq j\leq n \\ 0 \leq k\leq n  }}\frac{1}{i! j!k!}+\sum_{\substack{i+j+k=m\\ 0\leq i\leq n-1-t \\ 0 \leq j\leq n-2 \\ 0 \leq k\leq n+1  }}\frac{1}{i! j!k!},
\end{equation}
\begin{equation}\label{cH}
H(m)    \triangleq  \sum_{\substack{i+j+k=m\\ 0\leq i\leq n-1-t \\ 0 \leq j\leq n-1 \\ 0 \leq k\leq n  }}\frac{1}{i! j!k!} +\sum_{\substack{i+j+k=m\\ 0\leq i\leq n-t \\ 0 \leq j\leq n-2 \\ 0 \leq k\leq n  }}\frac{1}{i! j!k!}+\sum_{\substack{i+j+k=m\\ 0\leq i\leq n-2-t \\ 0 \leq j\leq n-1 \\ 0 \leq k\leq n+1  }}\frac{1}{i! j!k!}.
\end{equation}
Then, it is clear to see that (\ref{ieq}) holds if $G(m)\geq H(m)$ for $0 \leq m \leq 3n-2-t$ and at least one inequality sign is strictly established.
Next, we show that this condition is always true.

We give a concrete proof for $ 2 \leq t \leq n-2$ and for  $t=1$ or $t=n-1$,  we can also prove it by a similar method. If $m < n-t$, by (\ref{cG}) and (\ref{cH}), we have

\begin{eqnarray*}
   G(m)-H(m) &=& \sum_{\substack{i+j+k=m\\ 0\leq i\leq n-t \\ 0 \leq j\leq n-t \\ 0 \leq k\leq n-t  }}\frac{1}{i! j!k!}+\sum_{\substack{i+j+k=m\\ 0\leq i\leq n-2-t \\ 0 \leq j\leq n-t \\ 0 \leq k\leq n-t  }}\frac{1}{i! j!k!}+\sum_{\substack{i+j+k=m\\ 0\leq i\leq n-1-t \\ 0 \leq j\leq n-t \\ 0 \leq k\leq n-t  }}\frac{1}{i! j!k!} \\
     &\quad &- \biggl[  \sum_{\substack{i+j+k=m\\ 0\leq i\leq n-1-t \\ 0 \leq j\leq n-t \\ 0 \leq k\leq n  }}\frac{1}{i! j!k!} +\sum_{\substack{i+j+k=m\\ 0\leq i\leq n-t \\ 0 \leq j\leq n-t \\ 0 \leq k\leq n-t  }}\frac{1}{i! j!k!}+\sum_{\substack{i+j+k=m\\ 0\leq i\leq n-2-t \\ 0 \leq j\leq n-t \\ 0 \leq k\leq n-t  }}\frac{1}{i! j!k!} \biggl] = 0.
\end{eqnarray*}
If $m \geq n-t$,  taking
differences, we have
\begin{eqnarray}\label{d1}
\sum_{\substack{i+j+k=m\\ 0\leq i\leq n-t \\ 0 \leq j\leq n-1 \\ 0 \leq k\leq n-1  }}\frac{1}{i! j!k!}-\sum_{\substack{i+j+k=m\\ 0\leq i\leq n-t \\ 0 \leq j\leq n-2 \\ 0 \leq k\leq n  }}\frac{1}{i! j!k!} &=& \sum_{i=0}^{n-t} \sum_{\substack{i+j+k=m\\ 0 \leq j\leq n-1 \\ 0 \leq k\leq n-1  }}\frac{1}{i! j!k!}-\sum_{i=0}^{n-t}\sum_{\substack{i+j+k=m\\ 0 \leq j\leq n-2 \\ 0 \leq k\leq n  }}\frac{1}{i! j!k!}  \nonumber  \\
   &=& \sum_{i=0}^{n-t} \sum_{\substack{i+j+k=m\\  j= n-1 \\ 0 \leq k\leq n-1  }}\frac{1}{i! j!k!}-\sum_{i=0}^{n-t}\sum_{\substack{i+j+k=m\\ 0 \leq j\leq n-2 \\  k= n  }}\frac{1}{i! j!k!}.
\end{eqnarray}
Similarly, we arrive at the following two expressions.
\begin{equation}\label{d2}
  \sum_{\substack{i+j+k=m\\ 0\leq i\leq n-1-t \\ 0 \leq j\leq n-2 \\ 0 \leq k\leq n+1  }}\frac{1}{i! j!k!}-  \sum_{\substack{i+j+k=m\\ 0\leq i\leq n-1-t \\ 0 \leq j\leq n-1 \\ 0 \leq k\leq n  }}\frac{1}{i! j!k!}=   \sum_{i=0}^{n-1-t} \sum_{\substack{i+j+k=m\\ 0 \leq j\leq n-2 \\  k= n+1  }}\frac{1}{i! j!k!}-\sum_{i=0}^{n-1-t}\sum_{\substack{i+j+k=m\\ j= n-1 \\ 0 \leq k\leq n  }}\frac{1}{i! j!k!},
\end{equation}

\begin{equation}\label{d3}
  \sum_{\substack{i+j+k=m\\ 0\leq i\leq n-2-t \\ 0 \leq j\leq n \\ 0 \leq k\leq n  }}\frac{1}{i! j!k!}-  \sum_{\substack{i+j+k=m\\ 0\leq i\leq n-2-t \\ 0 \leq j\leq n-1 \\ 0 \leq k\leq n+1  }}\frac{1}{i! j!k!}=   \sum_{i=0}^{n-2-t} \sum_{\substack{i+j+k=m \\  j= n \\ 0 \leq k\leq n  }}\frac{1}{i! j!k!}-\sum_{i=0}^{n-2-t}\sum_{\substack{i+j+k=m\\ 0 \leq j\leq n-1 \\k=n+1  }}\frac{1}{i! j!k!}.
\end{equation}
This, combining with (\ref{cG}) and (\ref{cH}) implies that

\begin{eqnarray}\label{d11n}
\begin{aligned}
G(m)-H(m) & = \biggl[ \sum_{i=0}^{n-t} \sum_{\substack{i+j+k=m\\  j= n-1 \\ 0 \leq k\leq n-1  }}\frac{1}{i! j!k!}-\sum_{i=0}^{n-t}\sum_{\substack{i+j+k=m\\ 0 \leq j\leq n-2 \\  k= n  }}\frac{1}{i! j!k!} \biggl]\\
   & \quad +\biggl[\sum_{i=0}^{n-1-t} \sum_{\substack{i+j+k=m\\ 0 \leq j\leq n-2 \\  k= n+1  }}\frac{1}{i! j!k!}-\sum_{i=0}^{n-1-t}\sum_{\substack{i+j+k=m\\ j= n-1 \\ 0 \leq k\leq n  }}\frac{1}{i! j!k!}\biggl] \\
                    & \quad +\biggl[\sum_{i=0}^{n-2-t} \sum_{\substack{i+j+k=m \\  j= n \\ 0 \leq k\leq n  }}\frac{1}{i! j!k!}-\sum_{i=0}^{n-2-t}\sum_{\substack{i+j+k=m\\ 0 \leq j\leq n-1 \\k=n+1  }}\frac{1}{i! j!k!}\biggl].
\end{aligned}
\end{eqnarray}
By taking differences, we also have that
\begin{eqnarray}
  &\quad& \sum_{i=0}^{n-t} \sum_{\substack{i+j+k=m\\  j= n-1 \\ 0 \leq k\leq n-1  }}\frac{1}{i! j!k!}-\sum_{i=0}^{n-1-t}\sum_{\substack{i+j+k=m\\ j= n-1 \\ 0 \leq k\leq n  }}\frac{1}{i! j!k!} \nonumber\\
 &=&   \frac{1}{(n-t)! (n-1)!(m-2n+t+1)!}-
 \frac{1}{(m-2n+1)! (n-1)!n!} \mathbf{1}_{\{m \geq 2n-1\}}, \label{nt}
\end{eqnarray}
\begin{eqnarray}
  &\quad&  \sum_{i=0}^{n-1-t} \sum_{\substack{i+j+k=m\\ 0 \leq j\leq n-2 \\  k= n+1  }}\frac{1}{i! j!k!}-\sum_{i=0}^{n-2-t}\sum_{\substack{i+j+k=m\\ 0 \leq j\leq n-1 \\k=n+1  }}\frac{1}{i! j!k!} \nonumber\\
 &=&   \frac{1}{(n-1-t)! (m-2n+t)!(n+1)!}-
 \frac{1}{(m-2n)! (n-1)!(n+1)!} \mathbf{1}_{\{m \geq 2n\}}, \label{n1t}
\end{eqnarray}
and
\begin{eqnarray}
  &\quad&  \sum_{i=0}^{n-2-t} \sum_{\substack{i+j+k=m \\  j= n \\ 0 \leq k\leq n  }}\frac{1}{i! j!k!}
  -\sum_{i=0}^{n-t}\sum_{\substack{i+j+k=m\\ 0 \leq j\leq n-2 \\  k= n  }}\frac{1}{i! j!k!} \nonumber\\
 &=&  - \frac{1}{(n-t)! (m-2n+t)!n!} - \frac{1}{(n-1-t)! (m-2n+t+1)!n!} \nonumber\\
 &\quad& \ +\frac{1}{(m-2n)! n!n!} \mathbf{1}_{\{m \geq 2n\}} + \frac{1}{(m-2n+1)! (n-1)!n!} \mathbf{1}_{\{m \geq 2n-1\}}, \label{fd}
\end{eqnarray}
where $\mathbf{1}_B$ is the indicator function of $B$ taking value $1$ if the event $B$ is true and $0$ otherwise and (\ref{fd}) also holds formally for $m=3n-2-t$.
We substitute (\ref{nt}), (\ref{n1t}) and (\ref{fd}) into (\ref{d11n}), and simple calculations show
that
\begin{align*}
  G(m)-H(m) &=  \frac{1}{(n-t)! (n-1)!(m-2n+t+1)!}+\frac{1}{(n-1-t)! (m-2n+t)!(n+1)!}\\
  &\quad -\frac{1}{(m-2n)! (n-1)!(n+1)!} \mathbf{1}_{\{m \geq 2n\}}- \frac{1}{(n-t)! (m-2n+t)!n!}\\
&\quad  - \frac{1}{(n-1-t)! (m-2n+t+1)!(n)!} +\frac{1}{(m-2n)! n!n!} \mathbf{1}_{\{m \geq 2n\}}\\
&=  \frac{3n-t-1-m}{(n-t)! n!(m-2n+t+1)!}+\frac{m-3n+t}{(n-1-t)! (m-2n+t+1)!(n+1)!}\\
&\quad \qquad+\frac{1}{(m-2n)! n!(n+1)!}  \mathbf{1}_{\{m \geq 2n\}} \\
&= \frac{(3n-t-m)(t+1)-(n+1)}{(n-t)! (n+1)!(m-2n+t+1)!}+\frac{1}{(m-2n)! n!(n+1)!} \mathbf{1}_{\{m \geq 2n\}}
\end{align*}
When $n-t\! \leq \! m \! <2n$, we have $(3n-t-m)(t+1)-(n+1)\!> \! (n-t)(t+1)-(n+1)\!=(n-t-1)t-1\!>\!0$.
When $m\geq2n$, it is clear to see that
\begin{eqnarray}
\frac{(3n-t-m)(t+1)-(n+1)}{(n-t)! (n+1)!(m-2n+t+1)!}+\frac{1}{(m-2n)! n!(n+1)!} \mathbf{1}_{\{m \geq 2n\}}>0
\end{eqnarray}
if and only if
\begin{equation}\label{ifb}
\frac{(m-2n+t+1)!(n-t)!}{(m-2n)!n!}-(m-2n)(t+1)>(n+1)-(n-t)(t+1).
\end{equation}
Since
$$\frac{(z+t+1)(z+t)\cdots(z+1)}{n(n-1)\cdots(n-t+1)}-z(t+1)$$
is decreasing in $z$ for $0 \leq z \leq n-2-t$ and for
$m=3n-2-t$,
\begin{eqnarray*}
   &\quad& \frac{(3n-t-m)(t+1)-(n+1)}{(n-t)! (n+1)!(m-2n+t+1)!}+\frac{1}{(m-2n)! (n)!(n+1)!}  \mathbf{1}_{\{m \geq 2n\}}\\
    &=&  \frac{t^2+t}{(n-t)! (n+1)!n!}>0,
\end{eqnarray*}
then, (\ref{ifb}) holds for $m\geq2n$. This yields that  $G(m)>H(m)$ when $m\geq n-t$. Therefore, $(\ref{ieq})$ always holds for any $\rho>0$, which completes the proof.

(b)
First, by rearranging terms, we
arrive at the following more compact representation:
\begin{align*}
   &  \quad \ \rho^t f_{n-t}(\rho) f_{n}(\rho) -\rho^t f_{n-1-t}(\rho)f_{n+1}(\rho)\\
   &= \rho^t \sum_{m=0}^{2n-t}\biggl[\sum_{i=\max\{m-n,0\}}^{\min\{m,n-t\}}\frac{\rho^m}{i!(m-i)!}-\sum_{i=\max\{m-n-1,0\}}^{\min\{m,n-1-t\}}\frac{\rho^m}{i!(m-i)!} \biggl]\\
   &= \rho^t \sum_{m=0}^{2n-t}\biggl[\sum_{i=\max\{m-n,0\}}^{\min\{m,n-t\}}\frac{1}{i!(m-i)!}-\sum_{i=\max\{m-n-1,0\}}^{\min\{m,n-1-t\}}\frac{1}{i!(m-i)!} \biggl]\rho^m.
\end{align*}
Let $F(m,t)=\sum_{i=\max\{m-n,0\}}^{\min\{m,n-t\}}\frac{1}{i!(m-i)!}-\sum_{i=\max\{m-n-1,0\}}^{\min\{m,n-1-t\}}\frac{1}{i!(m-i)!}.$
Then, we have that for $t \geq 0$,
\begin{itemize}
  \item[1.] if $m\leq n-1-t$, $ F(m,t)=0$;
  \item[2.] if $n-t \leq m\leq n$, $ F(m,t)=\frac{1}{(n-t)!(m-n+t)!}>0$;
  \item[3.]  if $n+1 \leq m \leq 2n-t$, $ F(m,t)=\frac{1}{(n-t)!(m-n+t)!}-\frac{1}{(m-n-1)!(n+1)!}>0$.
\end{itemize}
Thus, $\sum_{m=0}^{2n-t}  F(m,t)>0$ and after simple
calculations we also  have that
\begin{itemize}
  \item[1.] if $m\leq n-1-t$ $(m+t \leq n-1)$, $ F(m,t)=0$ and $F(m+t,0)=0$;
  \item[2.] if $m=n-t$ $(m+t=n)$, $ F(m,t)=\frac{1}{(n-t)!(m-n+t)!}>0$ and $F(m+t,0)=\frac{1}{n!(m-n+t)!}>0$;
  \item[3.]  if $n-t+1 \leq m \leq n$ $(n+1 \leq m+t \leq n+t)$, $ F(m,t)=\frac{1}{(n-t)!(m-n+t)!}$ and $ F(m+t,0)=\frac{1}{n!(m-n+t)!}-\frac{1}{(m+t-n-1)!(n+1)!};$
\item[4.]  if $n+1 \leq m \leq 2n-t$ $(n+1+t \leq m+t \leq 2n)$, $ F(m,t)=\frac{1}{(n-t)!(m-n+t)!}-\frac{1}{(m-n-1)!(n+1)!}$ and $ F(m+t,0)=\frac{1}{n!(m-n+t)!}-\frac{1}{(m+t-n-1)!(n+1)!}.$
\end{itemize}
This immediately implies that for
\begin{align}
  &\quad \bigl[(n+1)\cdots(n-t+1)-(m+1-n+t)\cdots(m+1-n)\mathbf{1}_{\{m+1 \geq n+1\}}\bigl]\bigl[(n+1) \nonumber\\
  &\quad   -(m-n+t)\mathbf{1}_{\{m \geq n-t+1\}}\bigl]  -\bigl[(n+1)\cdots(n-t+1)-(m-n+t)\cdots(m-n)\mathbf{1}_{\{m \geq n+1\}}\bigl] \nonumber\\
  &\quad  \bigl[(n+1)-(m+1-n+t)\mathbf{1}_{\{m+1 \geq n-t+1\}}\bigl] \nonumber\\
  =& \ (n+1)\cdots(n-t+1)[(m+1-n+t)\mathbf{1}_{\{m+1 \geq n-t+1\}}-(m-n+t)\mathbf{1}_{\{m \geq n-t+1\}}]\nonumber\\
  &\quad + [(m-n+t)\cdots(m-n)\mathbf{1}_{\{m \geq n+1\}}][(n+1)-(m+1-n+t)\mathbf{1}_{\{m+1 \geq n-t+1\}}]\nonumber\\
  &\quad -[(m+1-n+t)\cdots(m+1-n)\mathbf{1}_{\{m+1 \geq n+1\}}][(n+1)-(m-n+t)\mathbf{1}_{\{m \geq n-t+1\}}] \label{ndca},
\end{align}
we have the following case:
\begin{itemize}
  \item[1.] if $m=n-t$, $(\ref{ndca})=(n+1)\cdots(n-t+1)>0$;
  \item[2.] if $n-t+1\leq m \leq n-1$, $(\ref{ndca})=(n+1)\cdots(n-t+1)>0$ (If $t=1$, there is no such item);
  \item[3.] if $m = n$, $(\ref{ndca})=(n+1)\cdots(n-t+1)-[(t+1)!\mathbf{1}_{\{m+1 \geq n+1\}}][(n+1)-(m-n+t)\mathbf{1}_{\{m \geq n-t+1\}}]>0 $.
   \item[4.] if $m \geq n+1$, $(\ref{ndca})=(n+1)\cdots(n-t+1)-((n+1)-t(2n-t-m))(m-n+t)\cdots(m-n+1)>0.$
\end{itemize}
Note that
for $ m \geq n-t$,
\begin{small}
\begin{align*}
  &\ \ \ \ \ \  \frac{F(m,t)}{ F(m+t,0)}<\frac{F(m+1,t)}{ F(m+1+t,0)}\\
  &\Leftrightarrow
  \frac{(n+1)\cdots(n-t+1)-(m-n+t)\cdots(m-n)\mathbf{1}_{\{m \geq n+1\}}}{(n+1)-(m-n+t)\mathbf{1}_{\{m \geq n-t+1\}}}< \\
  &\  \quad  \quad\quad\quad\quad\quad \quad
  \frac{(n+1)\cdots(n-t+1)-(m+1-n+t)\cdots(m+1-n)\mathbf{1}_{\{m+1 \geq n+1\}}}{(n+1)-(m+1-n+t)\mathbf{1}_{\{m+1 \geq n-t+1\}}}\\
  &\Leftrightarrow  [(n+1)\cdots(n-t+1)-(m\!+1\!-n+t)\cdots(m+1\!-n)\mathbf{1}_{\{m+1 \geq n+1\}}][(n+1)\!-(m-n+t)\mathbf{1}_{\{m \geq n-t+1\}}]\\
  &\ \ \  >[(n+1)\cdots(n-t+1)-(m-n+t)\cdots(m-n)\mathbf{1}_{\{m \geq n+1\}}][(n+1)-(m+1-n+t)\mathbf{1}_{\{m+1 \geq n-t+1\}}],
\end{align*}
\end{small}
which is always true and have been proved in (\ref{ndca}).
Thus, $$\frac{\rho^t F(m,t) \rho^m}{F(m+t,0)\rho^{m+t}}$$
is strictly increasing in $m$ for $m\geq n-t$.
Because $F(m,t)$ and $F(m+t,0)$ are the coefficients of $m+t$ power of $ \rho^t f_{n-t}(\rho) f_{n}(\rho) -\rho^t f_{n-1-t}(\rho)f_{n+1}(\rho)$ and $f_{n}(\rho)^2- f_{n-1}(\rho)f_{n+1}(\rho)$ (when $t=0$), respectively,
it  immediately  follows from Lemma \ref{lem4} that $\frac{ \rho^t f_{n-t}(\rho) f_{n}(\rho) -\rho^t f_{n-1-t}(\rho)f_{n+1}(\rho)}{
f_{n}(\rho)^2- f_{n-1}(\rho)f_{n+1}(\rho)}$  is strictly increasing in $\rho$ for $n \geq t \geq 1$.

(c) For any $i=1,2,3, \dots$, we have
\begin{align*}
 \sum_{m=0}^{n}m^i \frac{\rho^m}{m!}=\rho\sum_{m=0}^{n-1}(m+1)^{i-1}  \frac{\rho^m}{m!} &= \rho\biggl[\sum_{m=0}^{n-1}\biggl( \sum_{k=0}^{i-1}\binom{i-1}{k}m^k \frac{\rho^m}{m!}\biggl)\biggl]\\
 &= \rho\sum_{k=1}^{i-1}\biggl[\sum_{m=0}^{n-1} \binom{i-1}{k}m^k \frac{\rho^m}{m!}\biggl]+\rho
 \sum_{m=0}^{n-1} \frac{\rho^m}{m!}.
\end{align*}
Then, repeating the above arguments, we could derive the following expansion:
$$\sum_{m=0}^{n}m^i \frac{\rho^m}{m!}= \sum_{j=1}^{\min \{n,i\}} a_j \rho^j \sum_{m=0}^{n-j}  \frac{\rho^m}{m!},
$$
where $a_j$ is a positive  constant.
This means that
\begin{align}
   & \quad \ \frac{\rho  \biggl[ \frac{\sum_{m=0}^{n} m^i \frac{\rho^m}{m!}}{\sum_{m=0}^{n+1} \frac{\rho^m}{m!}}-\frac{\sum_{m=0}^{n-1} m^i \frac{\rho^m}{m!}}{\sum_{m=0}^{n} \frac{\rho^m}{m!}}\biggl]}{\mathbb{E}[L_{n+1}  ]-\mathbb{E}[L_{n} ]} \nonumber\\
   &= \frac{ \frac{\sum_{j=1}^{\min \{n,i\}} a_j \rho^j \sum_{m=0}^{n-j}  \frac{\rho^m}{m!}}{f_{n+1}(\rho)}-\frac{\sum_{j=1}^{\min \{n-1,i\}} a_j \rho^j \sum_{m=0}^{n-1-j}  \frac{\rho^m}{m!}}{f_{n}(\rho)}}{\frac{ f_{n}(\rho)}{f_{n+1}(\rho)}-\frac{ f_{n-1}(\rho)}{f_{n}(\rho)}} \nonumber\\
   &= \sum_{j=1}^{\min \{n-1,i\}}a_j \frac{ \rho^j f_{n-j}(\rho) f_{n}(\rho) -\rho^j f_{n-1-j}(\rho)f_{n+1}(\rho)}{
f_{n}(\rho)^2- f_{n-1}(\rho)f_{n+1}(\rho)}+a_n \mathbf{1}_{\{n \geq i\}} \frac{ \rho^n f_{0}(\rho) f_{n}(\rho) }{
f_{n}(\rho)^2- f_{n-1}(\rho)f_{n+1}(\rho)}. \label{zhan}
\end{align}
Using the linearity of summation,  by (a) and (b), the result holds immediately.
\end{proof}

\begin{proof}[{\bf Proof of Theorem \ref{thm2}}]
By the sample path comparison, it is easy to have $n_s \leq n_e$, which ensures that $n_s$
is finite.
According to the definition of $n_s$, we have $S^r(n_s) \geq S^r(n_s-1)$ and $S^r(n_s)> S^r(n_s+1)$.
Using algebraic manipulations analogous to those in Section 2.4 in Hassin and Haviv \cite{Hassinbook},
it follows from (\ref{sr}) that these relations
can  also be rewritten as
$$\frac{\rho \sum_{i=1}^2 C_i \biggl[ \frac{\sum_{m=0}^{n_s} m^i \frac{\rho^m}{m!}}{\sum_{m=0}^{n_s+1} \frac{\rho^m}{m!}}-\frac{\sum_{m=0}^{n_s-1} m^i \frac{\rho^m}{m!}}{\sum_{m=0}^{n_s} \frac{\rho^m}{m!}}\biggl]}{\mathbb{E}(L_{n_s+1})-\mathbb{E}(L_{n_s})} > R \geq \frac{ \rho \sum_{i=1}^2 C_i \biggl[ \frac{\sum_{m=0}^{n_s-1} m^i \frac{\rho^m}{m!}}{\sum_{m=0}^{n_s} \frac{\rho^m}{m!}}-\frac{\sum_{m=0}^{n_s-2} m^i \frac{\rho^m}{m!}}{\sum_{m=0}^{n_s-1} \frac{\rho^m}{m!}}\biggl]}{\mathbb{E}(L_{n_s})-\mathbb{E}(L_{n_s-1})}. $$
By the results of Proposition \ref{mlem3}(c),
$\frac{\rho \sum_{i=1}^2 C_i \biggl[ \frac{\sum_{m=0}^{n} m^i \frac{\rho^m}{m!}}{\sum_{m=0}^{n+1} \frac{\rho^m}{m!}}-\frac{\sum_{m=0}^{n-1} m^i \frac{\rho^m}{m!}}{\sum_{m=0}^{n} \frac{\rho^m}{m!}}\biggl]}{\mathbb{E}(L_{n+1}  )-\mathbb{E}(L_{n})}$ is strictly increasing in $n$,
so $n_s$ is unique. Since $\frac{\rho \sum_{i=1}^2 C_i \biggl[ \frac{\sum_{m=0}^{n} m^i \frac{\rho^m}{m!}}{\sum_{m=0}^{n+1} \frac{\rho^m}{m!}}-\frac{\sum_{m=0}^{n-1} m^i \frac{\rho^m}{m!}}{\sum_{m=0}^{n} \frac{\rho^m}{m!}}\biggl]}{\mathbb{E}(L_{n+1} ) -\mathbb{E}(L_{n})}$ is strictly increasing in $\rho$, $n_s$ is decreasing in $\rho$.
\end{proof}

\begin{pro}\label{rtmm}
\begin{description}
\item[(a)]
$\mathbb{E}(L_{n})$ is strictly increasing in $n$.
  \item[(b)] $\mathbb{E}(L_{n+1})-\mathbb{E}(L_{n} )>\mathbb{E}(L_{n+2})-\mathbb{E}(L_{n+1})$.
     \item[(c)]
     $\mathbb{E}(L_{n})^2>\mathbb{E}(L_{n+1}  )\mathbb{E}(L_{n-1}),$ i.e.,
     $\frac{\mathbb{E}(L_{n} )}{\mathbb{E}(L_{n-1})}>\frac{\mathbb{E}(L_{n+1}  )}{\mathbb{E}(L_{n})}$.
\item[(d)]
$\frac{\mathbb{E}(L_{n+1})}{\mathbb{E}(L_{n})} $ is increasing in  $\rho$.
\end{description}
\end{pro}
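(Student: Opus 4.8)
The plan is to prove (a) and (b) directly and to deduce (c) and (d) from (b), since (b)---the discrete concavity of $\mathbb{E}(L_n)$ in $n$---is the only genuinely hard input. Throughout I write $f_n=f_n(\rho)=\sum_{m=0}^{n}\rho^m/m!$, so that by (\ref{5}) one has $\mathbb{E}(L_n)=\rho f_{n-1}/f_n$, and I record the elementary identity $f_n'(\rho)=f_{n-1}(\rho)$, which will govern all of the $\rho$-dependence.

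For (a) I would write the first difference as a single ratio,
\[
\mathbb{E}(L_{n+1})-\mathbb{E}(L_n)=\rho\Bigl(\tfrac{f_n}{f_{n+1}}-\tfrac{f_{n-1}}{f_n}\Bigr)=\rho\,\frac{g_n}{f_nf_{n+1}},\qquad g_n:=f_n^2-f_{n-1}f_{n+1},
\]
so that strict monotonicity reduces to the log-concavity statement $g_n>0$. I would prove this by substituting $f_{n-1}=f_n-\rho^n/n!$ and $f_{n+1}=f_n+\rho^{n+1}/(n+1)!$ and simplifying to the exact expansion $g_n=\frac{\rho^n}{(n+1)!}\sum_{m=0}^{n}(n+1-m)\frac{\rho^m}{m!}$, whose every coefficient $n+1-m$ is positive for $0\le m\le n$; hence $g_n>0$ and $\mathbb{E}(L_n)$ is strictly increasing. (This positivity is incidentally exactly what makes the denominators in Proposition \ref{mlem3} legitimate.)

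For (b) I would expose the numerator by writing the difference in the sharper form $\mathbb{E}(L_{n+1})-\mathbb{E}(L_n)=\frac{\rho^{n+1}}{(n+1)!}\,\frac{S_n}{f_nf_{n+1}}$, where $S_n:=(n+1)f_{n+1}-\rho f_n=\sum_{m=0}^{n}(n+1-m)\frac{\rho^m}{m!}$ satisfies $S_{n+1}=S_n+f_{n+1}$ (note $g_n=\frac{\rho^n}{(n+1)!}S_n$, so this agrees with the formula in (a)). Comparing two consecutive differences and clearing the common positive factors then reduces (b) to the single polynomial inequality $(n+2)S_nf_{n+2}>\rho S_{n+1}f_n$, equivalently $g_nf_{n+2}>g_{n+1}f_n$. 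I expect this to be the main obstacle: both sides are polynomials in $\rho$ with positive coefficients, so a crude coefficientwise comparison cannot work, and I would instead prove it by the difference-taking/index-bookkeeping method already used for Proposition \ref{mlem3}(a)---expanding the triple products $f_af_bf_c$ as sums of $\frac{1}{i!j!k!}$ over explicit index boxes, cancelling the common range, and checking that the surviving boundary terms assemble into a manifestly nonnegative quantity that is strictly positive somewhere.

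Granting (b), the last two parts are short. For (c), (b) applied at index $n-1$ gives $2\mathbb{E}(L_n)>\mathbb{E}(L_{n-1})+\mathbb{E}(L_{n+1})$; since $\mathbb{E}(L_{n-1}),\mathbb{E}(L_{n+1})>0$ for $n\ge 2$, AM--GM yields $2\mathbb{E}(L_n)>2\sqrt{\mathbb{E}(L_{n-1})\mathbb{E}(L_{n+1})}$, i.e.\ $\mathbb{E}(L_n)^2>\mathbb{E}(L_{n-1})\mathbb{E}(L_{n+1})$ (the case $n=1$ is immediate since $\mathbb{E}(L_0)=0$). For (d), set $R_n:=\mathbb{E}(L_{n+1})/\mathbb{E}(L_n)=f_n^2/(f_{n-1}f_{n+1})$; differentiating its logarithm and using $f_k'=f_{k-1}$ gives $\frac{d}{d\rho}\log R_n=\frac{1}{\rho}\bigl(2\mathbb{E}(L_n)-\mathbb{E}(L_{n-1})-\mathbb{E}(L_{n+1})\bigr)$, which is positive by the same instance of (b); hence $R_n$ is increasing in $\rho$. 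In this way the entire proposition rests on the one polynomial inequality underlying (b).
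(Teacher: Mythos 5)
Your plan has the right shape and three of its four components check out. The identity $f_n^2-f_{n-1}f_{n+1}=\frac{\rho^n}{(n+1)!}\sum_{m=0}^{n}(n+1-m)\frac{\rho^m}{m!}$ is correct and gives (a) cleanly; the AM--GM deduction of (c) from the concavity in (b) is essentially the paper's own argument; and your proof of (d) via $\frac{d}{d\rho}\log\frac{f_n^2}{f_{n-1}f_{n+1}}=\frac{1}{\rho}\bigl(2\mathbb{E}(L_n)-\mathbb{E}(L_{n-1})-\mathbb{E}(L_{n+1})\bigr)$ is both correct and substantially shorter than the paper's, which instead expands $f_n^2/(f_{n-1}f_{n+1})$ in powers of $\rho$ and runs a long case analysis on the coefficient ratios before invoking Lemma \ref{lem4}.

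The gap is (b). You correctly reduce it to $(n+2)S_nf_{n+2}>\rho S_{n+1}f_n$, equivalently $2f_n^2f_{n+2}>f_{n-1}f_{n+1}f_{n+2}+f_nf_{n+1}^2$, but then you only announce that you \emph{would} prove this by the index-bookkeeping method of Proposition \ref{mlem3}(a). That is a plan, not a proof: the inequality is of the same genus as (\ref{ieq}) but is not an instance of it, the two sides have equal degree and equal leading coefficients, and whether the surviving terms after all cancellations are nonnegative has to be checked by a computation comparable in length to the one the paper devotes to (\ref{ieq}). (In small cases the difference of the two polynomials appears to have nonnegative coefficients, so the verification may be more tractable than your remark that a coefficientwise comparison ``cannot work'' suggests, but it still has to be carried out for general $n$.) Since you derive (c) and (d) from (b), the gap propagates to the rest of the proposition. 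The paper avoids the issue entirely by proving (a) and (b) probabilistically, coupling the sample paths of the $M/M/n/n$, $M/M/(n+1)/(n+1)$ and $M/M/(n+2)/(n+2)$ chains on a common probability space; if you want to keep your otherwise attractive purely algebraic treatment, you must either complete the polynomial verification for (b) or import the coupling argument for that one part.
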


\begin{proof}
(a) We use a coupling method here although we may also directly prove it by taking differences. Noting that $M/G/n/n$ and $M/M/n/n$ have the same expression of $\mathbb{E} (L_{n})$,  we only need to consider the Markovian case. Suppose that Process 1 and Process 2 is an $M/M/n/n$ queueing process
and an $M/M/n+1/n+1$ queueing process, respectively.
Follow the sample paths of two processes defined on
the same probability space and starting in the same state $s<n$, then both processes see the same arrivals, services for each customer, when customers in the system is no more than $n$.
Consider the first time the processes enter the state $n$ and a new customer arrives. Process 1 rejects
the customer but Process 2 accepts this customer.
At this point,  the queue length of the Process 2  is greater than the length queue of the Process 1.
Then, if Process 1 accepts an arriving customer, Process 2 must accept this customer.
If a service is the next event for Process 1, Process 2 also completes a service with
probability 1.  If only Process 1 completes a service for the ``$n+1$'' customer, both processes remain
coupled until the next time in state $n$ with an arriving customer.
Thus, the queue length of Process 1 is always less than the queue length of Process 2. Since the state
$n$ is positive recurrent for Process 2, the unequal relationship of  expected queue length must be strict, i.e., $ \mathbb{E}(L_{n})< \mathbb{E}(L_{n+1})$, which completes the proof.

(b)
We still use the coupling method and just consider three Markovian queueing processes.
Assume that
Process 1 (P1), Process 2 (P2) and Process 3 (P3) are an $M/M/n/n$, $M/M/n+1/n+1$ and $M/M/n+2/n+2$ queueing process, respectively.  Let $L_{Pi} (i=1,2,3)$ denote the queue length of Process $i$.
To complete the proof, follow the sample paths of three processes defined on
the same probability space and starting in the same state $s<n$. All processes move in parallel when the state is not in $n$. Consider the first time the processes enter the state $n$. If a service is the next event, all processes complete a service and  three processes remain coupled until the next time in state $n$. If an arrival is the next event, P1 rejects
the customer but P2 and P3 accept this customer. We denote this case by C1. Meanwhile, $L_{P2}-L_{P1}=1>0=L_{P3}-L_{P2}$.

Under C1, if a service for only P2 and P3 is the next event (the service rate of P2 and P3 is larger than the service rate of P1), three processes remain coupled and $L_{P2}-L_{P1}=0=L_{P3}-L_{P2}$.  If a service for P1, P2 and P3 is the next event, $L_{P2}-L_{P1}=1>0=L_{P3}-L_{P2}$.
If an arrival is the next event, P1, P2 rejects
the customer but P3 accepts this customer, thus $L_{P2}-L_{P1}=1=L_{P3}-L_{P2}$ and denote this case by C2.

Under C2, if an arrival is the next event, P1, P2 and P3 reject
the customer, thus $L_{P2}-L_{P1}=1=L_{P3}-L_{P2}$.
If a service for only P1 is the next event (the service rate of P2 and P3 is larger than the service rate of P1), we also have $L_{P2}-L_{P1}=1=L_{P3}-L_{P2}$.
If a service for only P2, P3 is the next event, we have $L_{P2}-L_{P1}=0<1=L_{P3}-L_{P2}$.
If a service for only P3 is the next event, we have $L_{P2}-L_{P1}=1>0=L_{P3}-L_{P2}$.
According to the Markovian property, the last two cases of C2 occur with the same probability and by the sample path comparison (analogous to the above comparison), before
both two cases return the same state, they have the same time path distribution. Thus, the expected difference of queue length are same.

Finally, all the above sample paths occur with positive probability, which immediately shows that $\mathbb{E}(L_{n+1})-\mathbb{E} (L_{n} )>\mathbb{E}(L_{n+2})-\mathbb{E}(L_{n+1})$.

(c)
It follows from (b) that
$$[2\mathbb{E}(L_{n})]^2 > [\mathbb{E}(L_{n-1})+\mathbb{E}(L_{n+1}  )]^2\geq[\mathbb{E}(L_{n-1})-\mathbb{E}(L_{n+1}  )]^2+4\mathbb{E}(L_{n-1})\mathbb{E}(L_{n+1}).$$
Thus, $\mathbb{E}(L_{n})^2>\mathbb{E}(L_{n+1}) \mathbb{E}(L_{n-1})$.

(d)
Using the expression of $\mathbb{E}(L_{n})$,
we only need to show that $\frac{ \bigl(\sum_{k=0}^{n} \frac{\rho^k}{k!}\bigl)^2}{\bigl(\sum_{k=0}^{n-1}\frac{\rho^k}{k!}\bigl)\bigl(\sum_{k=0}^{n+1}\frac{\rho^k}{k!}\bigl)}$ is increasing in $\rho$.
By rearranging terms,
we can expand the above expression with respect to $\rho$ as  follows:
$$\frac{ \bigl(\sum_{k=0}^{n}
\frac{\rho^k}{k!}\bigl)^2}{\bigl(\sum_{k=0}^{n-1}\frac{\rho^k}{k!}\bigl)\bigl(\sum_{k=0}^{n+1}\frac{\rho^k}{k!}\bigl)}=
\frac{\sum_{m=0}^{2n}\sum_{\substack{i+j=m\\ 0\leq i\leq n \\ 0 \leq j\leq n}}\frac{1}{i! j!}\rho^m}{\sum_{m=0}^{2n}\sum_{\substack{i+j=m\\ 0\leq i\leq n-1 \\ 0 \leq j\leq n+1}}\frac{1}{i! j!}\rho^m}
=\frac{\sum_{m=0}^{2n}\sum_{i= \max\{m-n,0\}}^{\min\{m,n\}}\frac{1}{i! (m-i)!}\rho^m}{\sum_{m=0}^{2n}\sum_{i= \max\{m-n-1,0\}}^{\min\{m,n-1\}}\frac{1}{i! (m-i)!}\rho^m}.$$
Note that for $m \leq n-1$, $$\frac{\sum_{i= \max\{m-n,0\}}^{\min\{m,n\}}\frac{1}{i! (m-i)!}}{\sum_{i= \max\{m-n-1,0\}}^{\min\{m,n-1\}}\frac{1}{i! (m-i)!}}=\frac{\sum_{i= 0}^{m}\frac{1}{i! (m-i)!}}{\sum_{i= 0}^{m}\frac{1}{i! (m-i)!}}=1.$$
For $m=n$, we have
$$\frac{\sum_{i= \max\{m-n,0\}}^{\min\{m,n\}}\frac{1}{i! (m-i)!}}{\sum_{i= \max\{m-n-1,0\}}^{\min\{m,n-1\}}\frac{1}{i! (m-i)!}}=
\frac{\frac{1}{n!(m-n)!}+\sum_{i=0}^{n-1}\frac{1}{i! (m-i)!}}{\sum_{i=0}^{n-1}\frac{1}{i! (m-i)!}}>1.$$
For $ n+1\leq m \leq2n$, it follows from $m-n<n+1$ that
$$h_m \triangleq\frac{\sum_{i= \max\{m-n,0\}}^{\min\{m,n\}}\frac{1}{i! (m-i)!}}{\sum_{i= \max\{m-n-1,0\}}^{\min\{m,n-1\}}\frac{1}{i! (m-i)!}}=
\frac{\frac{1}{n!(m-n)!}+\sum_{i= m-n}^{n-1}\frac{1}{i! (m-i)!}}{\sum_{i= m-n}^{n-1}\frac{1}{i! (m-i)!}+\frac{1}{(m-n-1)!(n+1)!}}.$$
Let $a_m=\sum_{i=m-n-1}^{n-1}\frac{1}{i!(m-i)!}$, then for $m\geq n+1$, we have
\begin{align}
  h_m &= \frac{a_m+\frac{1}{n!(m-n)!}-\frac{1}{(m-n-1)!(n+1)!}}{a_m} \nonumber \\
   &= 1+\frac{\frac{1}{n!(m-n)!}-\frac{1}{(m-n-1)!(n+1)!}}{a_m} \nonumber \\
   &= 1+\frac{2n-m+1}{(m-n)!(n+1)!a_m}.  \label{tebie}
\end{align}
Thus,
$$h_m-h_{m-1}= \frac{2n-m+1}{(m-n)!(n+1)!a_m} -\frac{2n-m+2}{(m-1-n)!(n+1)!a_{m-1}} \geq 0  $$
if and only if
$$a_{m-1} \geq a_{m}(m-n+\frac{1}{2n-m+1}).$$
Next, we prove that this relationship always holds for $n+1 \leq m \leq 2n$.

If $m=2n$, $a_m(m-n+\frac{1}{2n-m+1} ) =\frac{1}{(n-1)!n!}<\frac{1}{(n-2)!(n+1)!}+\frac{1}{(n-1)!n!}=a_{2n-1}$.

If $m=2n-1$,
\begin{align*}
  a_m(m-n+\frac{1}{2n-m+1} )  &= (\frac{1}{(n-1)!n!}+\frac{1}{(n-2)!(n+1)!})(n-1+\frac{1}{2}) \\
   &= \frac{1}{(n-2)!n!}+\frac{n-2+1}{(n-2)!(n+1)!}+\frac{1}{2}(\frac{1}{(n-1)!n!}+\frac{1}{(n-2)!(n+1)!})\\ &= \frac{1}{(n-2)!n!}+\frac{1}{(n-3)!(n+1)!}+\frac{3}{2(n-2)!(n+1)!}+\frac{1}{2}\frac{1}{(n-1)!n!}\\
    & \leq  \frac{1}{(n-1)!(n-1)!}+\frac{1}{(n-2)!n!}+\frac{1}{(n-3)!(n+1)!}\\
    &= a_{2n},
\end{align*}
where the last inequality is established from
$\frac{3(n-1)}{2(n+1)n}+\frac{1}{2n}\leq \frac{1}{2}$.

For $n+1 \leq m \leq 2n-2 $, if $2n-m+1$ is an odd number,
we have $\frac{1}{i!(m-i)!}< \frac{1}{(\frac{m}{2})!(\frac{m}{2})!}$ for $m-n-1 \leq i \leq n-1$, which implies that
\begin{eqnarray}
   a_m\frac{1}{2n-m+1}< \frac{2n-m+1}{(\frac{m}{2})!(\frac{m}{2})!} \frac{1}{2n-m+1}=\frac{1}{(\frac{m}{2})!(\frac{m}{2})!}.
\end{eqnarray}
Note that $(\frac{1}{(n-1)!(m-n+1)!}+\frac{1}{(m-n-1)!(n+1)!})(m-n)<(\frac{1}{(n-1)!(m-n)!}+\frac{1}{(m-n-2)!(n+1)!})$  for  $m<2n-2$, $\frac{m-n}{i!(m-i)!}<\frac{1}{i!(m-i-1)!}$ for $ \frac{m}{2}<i<n-1$ and $\frac{m-n}{i!(m-i)!} \leq\frac{1}{(i-1)!(m-i)!}$ for $m-n-1<i< \frac{m}{2}$, thus we also have
\begin{eqnarray*}
  a_{m}(m-n) < a_{m-1}-\frac{1}{(\frac{m}{2})!(\frac{m}{2})!}.
\end{eqnarray*}
For $n+1 \leq m \leq 2n-2 $, if $2n-m+1$ is an even number, we have $\frac{1}{i!(m-i)!}< \frac{1}{(\frac{m+1}{2})!(\frac{m-1}{2})!}$ for $m-n-1 \leq i \leq n-1$, which yields that
\begin{eqnarray}
   a_m\frac{1}{2n-m+1}&<& \frac{2n-m+1}{(\frac{m+1}{2})!(\frac{m-1}{2})!} \frac{1}{2n-m+1}=\frac{1}{(\frac{m+1}{2})!(\frac{m-1}{2})!}.
\end{eqnarray}
Note that when $m<2n-2$, $$(\frac{1}{(n-1)!(m-n+1)!}+\frac{1}{(m-n-1)!(n+1)!})(m-n)<(\frac{1}{(n-1)!(m-n)!}+\frac{1}{(m-n-2)!(n+1)!}),$$  $\frac{m-n}{i!(m-i)!}<\frac{1}{i!(m-i-1)!}$ for $ \frac{m+1}{2}<i<n-1$ and $\frac{m-n}{i!(m-i)!} \leq\frac{1}{(i-1)!(m-i)!}$ for $m-n-1<i< \frac{m+1}{2}$, which shows that

\begin{eqnarray*}
  a_{m}(m-n) &<& a_{m-1}-\frac{1}{(\frac{m+1}{2})!(\frac{m-1}{2})!}.\\
\end{eqnarray*}
Summarizing the above results, we know that $h_{m+1}\geq h_{m}$ for $m \geq n+1$.

Note that $\frac{1}{i!(n+1-i)!}=\frac{1}{i!(n-i)!}\frac{1}{n+1-i}<\frac{1}{i!(n-i)!}\frac{n}{n+1}$ for $0\leq i \leq n-1$, which, together with (\ref{tebie}), yields that for $m=n$,
\begin{eqnarray*}
  h_{n+1}-\frac{\sum_{i= \max\{m-n,0\}}^{\min\{m,n\}}\frac{1}{i! (m-i)!}}{\sum_{i= \max\{m-n-1,0\}}^{\min\{m,n-1\}}\frac{1}{i! (m-i)!}} &=& \frac{1}{n!}\frac{n}{(n+1)\sum_{i=0}^{n-1}\frac{1}{i!(n+1-i)!}} - \frac{1}{n!} \frac{1}{\sum_{i=0}^{n-1}\frac{1}{i!(n-i)!}}>0.
\end{eqnarray*}
Thus, $$\frac{\sum_{i= \max\{m-n,0\}}^{\min\{m,n\}}\frac{1}{i! (m-i)!}}{\sum_{i= \max\{m-n-1,0\}}^{\min\{m,n-1\}}\frac{1}{i! (m-i)!}}$$
is increasing in $m$ for $0 \leq m\leq 2n-2$. This, combining with Lemma \ref{lem4} implies that $\frac{\mathbb{E}(L_{n+1})}{\mathbb{E}(L_{n})} $ is increasing in  $\rho$. The proof is complete.
\end{proof}

\begin{proof}[{\bf Proof of Theorem \ref{thm3}}]
By the definition of $n_m$, we have $S(n_m) \geq S(n_m-1)$ and $S(n_m)> S(n_m+1)$.
Applying a similar argument to that in the proof of Theorem  \ref{thm2}, these relations
can  also be rewritten as
$$\frac{ \sum_{i=1}^2 C_i [
 \mathbb{E}(L_{n_m+1})(n_m)^{i}\!-\! \mathbb{E}(L_{n_m})(n_m \!-\!1)^{i}]}{ \mathbb{E}(L_{n_m+1})- \mathbb{E}(L_{n_m})}>\! R  \geq \frac{ \sum_{i=1}^2 C_i [ \mathbb{E}(L_{n_m})(n_m\!-\!1)^{i}\!-\! \mathbb{E}(L_{n_m-1})(n_m\!-\!2)^ {i}]}{ \mathbb{E}(L_{n_m})- \mathbb{E}(L_{n_m-1})}. $$
Note that
\begin{eqnarray*}
  \frac{   \mathbb{E}(L_{n+1})n^{i}- \mathbb{E}(L_{n})(n-1)^{i}}{ \mathbb{E}(L_{n+1})- \mathbb{E}(L_{n})} &=& n^i+ \frac{\mathbb{E}(L_{n} )(n^{i}-(n-1)^{i})}{ \mathbb{E}(L_{n+1})- \mathbb{E}(L_{n})}\\
   &=& (n+1)^i+ \frac{n^{i}-(n-1)^{i}}{
  \frac{\mathbb{E}(L_{n+1})}{\mathbb{E}(L_{n})}  - 1}.
\end{eqnarray*}
By Lemma \ref{rtmm}(c), $\frac{\mathbb{E}(L_{n+1}  )}{\mathbb{E}(L_{n})}$ is strictly decreasing in $n$ and $n^{i}-(n-1)^{i}$ is increasing in $n$ for $n \geq 1$, which means  that
$\frac{\mathbb{E}(L_{n+1})n^{i}- \mathbb{E}(L_{n} )(n-1)^{i}}{ \mathbb{E}(L_{n+1})- \mathbb{E}(L_{n})}$ is strictly increasing in $n$. Thus,
$n_m$ is unique. By Lemma \ref{rtmm}(d), we have
that
$ \frac{n^{i}-(n-1)^{i}}{
  \frac{\mathbb{E}(L_{n+1})}{\mathbb{E}(L_{n})}  - 1}$
is decreasing in $\rho$, which immediately shows that $n_m$ is increasing in $\rho$. The proof is complete.
\end{proof}

\begin{rmk}
Although the cost function in Theorem \ref{thm2} and Theorem \ref{thm3} is a combination of a linear function and a quadratic function, the methods in these two proofs are very general and suitable for any $i\geq 1$. Thus, the results of Theorem \ref{thm2} and Theorem \ref{thm3} can be  generalized to the case in which the cost is any finite polynomial function with non-negative coefficients.

\end{rmk}

\section{Conclusions and Extensions}

In this paper, we consider the equilibrium, social welfare, and revenue of an infinite-server queue in both observable and unobservable contexts and get the existence, uniqueness and computable expressions of optimal strategies for these goals.
We also numerically compare the social welfare and the revenue with different thresholds and information levels,
and insight into some useful information under different conditions.

On this topic, there is no denying that our hypothesis is somewhat rough compared to the actual background.
Because of this, many expansion  questions are worth studying. We make some comments on potential problems in the following.

\begin{itemize}
  \item[1.]
In the actual environment, the arrival of customers is affected by many aspects, such as weather or  holidays in the park examples.
Therefore,  analogous to Chen and Hasenbein \cite{Hasenbein}, it is interesting and practical to study the model with uncertain arrival rates.
 For unobservable case, we could investigate it by similar methods. However, for the observable case, affected by expectations, we still encounter some monotonic proofs that need to be solved urgently, although a large number of numerical results show that they are correct. We look forward to proving it in the future.

  \item[2.] In the notices posted in the system, we often see that  customers are non-homogeneous and the system has price discrimination.
      For example,  ticket prices of (toll)
parks are related to age groups, regions or other requirements.
       Therefore,
      it is a meaningful direction to research and design (pricing) the infinite-server queue with multiple types of customers. There is a lot of literature focusing on such queueing problems, such as Feinberg and Yang \cite{Feinberg}, Zhou, Chao and Gong \cite{Zhou},
      Liu and Hasenbein \cite{Liucp}, etc,
       so we believe that a similar method can be used to solve the infinite-server queue. Furthermore, considering the model of customers arriving in batches is also a more practical problem.

\end{itemize}

\section*{Acknowledgment}
This work is partially supported by the National Natural Science Foundation of China (No.
11771452, No. 11971486) and Natural Science Foundation of Hunan (No. 2020JJ4674).





\end{document}